\newtheorem*{pps}{Proposition \ref{nt1}}
\newtheorem*{tma}{Theorem \ref{nt3}}
\newtheorem{thm}{Theorem}[section]
\newtheorem{lema}[thm]{Lemma}
\newtheorem{defi}[thm]{Definition}
\newtheorem{exe}[thm]{Example}
\newtheorem{prop}[thm]{Proposition}
\newtheorem{cor}[thm]{Corollary}
\newcommand{\lra}{ \longrightarrow }
\newcommand{\Ker}{{\rm Ker}}
\renewcommand{\Im}{{\rm Im}}
\author{Natalia A. Viana Bedoya\thanks{The beginning of this   work appears in my PhD thesis 
    written under supervision of the Professor Daciberg Lima
    Gon{\c{c}}alves and supported by FAPESP 
    process 03/12309-4.}  \ and Daciberg Lima Gon\c calves}
\title{Decomposability problem on branched coverings}
\date{}
\begin{document} 

\maketitle
\begin{abstract}
 Given a
branched covering of degree $d$ between closed 
surfaces, it determines  
a collection of partitions of $d$, \emph{the branch data}. In 
this work we show that any branch data are realized by an indecomposable
primitive branched covering  on a
connected closed surface $N$ with $\chi(N) \leq 0$. This shows that decomposable and indecomposable realizations may coexist. Moreover, we characterize
the branch data of a decomposable primitive branched
covering. 
\\[0.2cm]
\noindent 
\textbf{Key words:} {branched coverings, permutation groups.}
\\[0.2cm]
\noindent
\textbf{2000 MSC:} {57M12, 57M10, 54C10.} 
\end{abstract}
\section*{Introduction}
We begin by describing   some historical facts and results related with the problem considered in this work.   In 1957  Borsuk and Molski \cite{BM} asked about the existence of a continuous map of finite
order\footnote{A continuous map $\phi$ defined on a space $X$ is said to be of \emph{order} $\leq k \in
\mathbb{Z}^{+}$ if for any $y \in \phi(X)$, $\phi^{-1}(y)$ contains at most
$k$ points.}, which is not a composition of 
\emph{simple maps} (maps of order $\leq 2$). In 1959, Sieklucki \cite{S} showed that every such a map defined on a compact finite
dimensional metric space is a composition of simple maps, and  gave an example on an infinite dimensional compact space which cannot be
decomposed.  
In 1973, Baildon \cite{B} showed that if an open
surjective map of finite order  between closed surfaces  is a composition of
$k$ simple open surjective maps, then its order is equal to $2^{k}$. In 2002, Krzempek  \cite{KJ} constructed covering maps on locally arcwise connected continua that are not factorizable into covering maps of order $\leq n-1$, for all $n$.  In 2002,
 Bogataya, Bogaty{\u\i} and Zieschang \cite{BBZ} extended Baildon's theorem to
compositions of arbitrary open maps and showed that the order of a product
(composition)  is the product of the orders. Moreover, they gave an
example  of a 4-fold covering of a surface of genus 2 by a surface of genus 5
that cannot be represented as a composition of two non-trivial open maps.  

In \cite{Why}, Whyburn showed that finite order open maps on closed surfaces are branched coverings. 
 The
purpose of this work is to  answer the question  whether or not a primitive (surjective on $\pi_{1}$)
branched 
covering of degree $d \in \mathbb{Z^{+}}$ between closed connected surfaces  is
decomposable by non-trivial coverings of degree $< d$. We will impose the condition of
surjection on the fundamental group because  a
non-primitive branched covering is always decomposable (see \cite{BGZ}).

A branched covering $\phi: M \lra N$ of degree $d$ between closed connected surfaces determines  
a finite collection $\mathscr{D}$  of partitions  of $d$, \emph{the branch
 data}. Conversely, given $\mathscr{D}$ and $N$,  Husemoller in
 \cite{Hu} and Ezell in \cite{Ez} gave a necessary and sufficient condition ({\it Hurwitz's condition for $\mathscr{D}$}, see Section \ref{pbc}) for the existence of a branched covering $\phi: M \lra N$ between connected closed surfaces with $\mathscr{D}$
 as branch data, whenever  $\chi(N)\leq 0$. In this case we say that {\it $\mathscr{D}$ is realizable by $\phi$ on $N$}. Moreover Bogatyi, Gon{\c{c}}alves,
 Kudryavtseva and Zieschang, in \cite{BGKZ2} and \cite{BGKZ1}, showed under that
 condition,  that the branched covering can be chosen primitive. 
 
 A collection of
 partitions of $d$ satisfying Hurwitz's 
 condition  will be called \emph{admissible}. The main result is:
 \begin{tma}
Every non-trivial admissible data are  realized on any  $N$ with $\chi(N)\leq 0$,  by an
indecomposable primitive branched covering. 
\end{tma}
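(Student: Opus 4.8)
The plan is to pass to the monodromy description of branched coverings and reformulate the two required properties purely group-theoretically. A branched covering of degree $d$ with branch set $B=\{y_1,\dots,y_k\}\subset N$ and branch data $\mathscr{D}$ is encoded by a transitive homomorphism $\rho\colon\pi_1(N\setminus B)\lra S_d$ sending the loop $x_j$ around $y_j$ to a permutation $\sigma_j$ of cycle type $\mathscr{D}_j$. Writing $H=\mathrm{Stab}(1)$ for the point stabiliser and $G=\rho(\pi_1(N\setminus B))$, I will use two translations. First, the covering is \emph{indecomposable} iff $H$ is a maximal subgroup, i.e.\ iff $G$ acts \emph{primitively} on $\{1,\dots,d\}$: an intermediate subgroup $H<K<\pi_1(N\setminus B)$ is exactly a nontrivial block system, and filling in the punctures over $B$ turns it into a factorisation $M\to L\to N$ by branched coverings of degrees $>1$. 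Second, the covering is \emph{primitive} in the sense of the paper, i.e.\ $\pi_1$-surjective, iff the normal closure of $\{\sigma_1,\dots,\sigma_k\}$ in $G$ is already transitive: indeed $\phi_*\pi_1(M)=\pi_1(N)$ is equivalent to $H\cdot\langle\langle x_1,\dots,x_k\rangle\rangle=\pi_1(N\setminus B)$, and this holds iff the image $\rho(\langle\langle x_j\rangle\rangle)$, which is the normal closure of the $\sigma_j$ in $G$, acts transitively. Thus the theorem reduces to realising the prescribed cycle types by permutations satisfying the surface relation whose group is primitive and whose branch part has transitive normal closure.

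The key simplification is that both conditions follow at once when the monodromy group is large: if $d\ge5$ and $G\supseteq A_d$, then $G$ is primitive, and because the data are non-trivial some $\sigma_j\neq1$ has normal closure containing the simple group $A_d$, which is transitive; hence the covering is simultaneously indecomposable and $\pi_1$-surjective. I will therefore aim, for each admissible $\mathscr{D}$ and each $N$ with $\chi(N)\le0$, to build a realisation with $G\supseteq A_d$. Fix $\sigma_1,\dots,\sigma_k$ of the required cycle types; since the data are admissible they are realised by \emph{some} covering, and the surface relation then forces $\sigma_1\cdots\sigma_k\in A_d$, so that $\tau:=(\sigma_1\cdots\sigma_k)^{-1}\in A_d$. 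Because $\chi(N)\le0$ the surface has at least one handle or at least two cross-caps, and it remains to choose the surface generators satisfying the defining relation: in the orientable case ($N$ of genus $g\ge1$) I must solve $\prod_{i=1}^{g}[\alpha_i,\beta_i]=\tau$, and in the non-orientable case ($N$ of genus $h\ge2$) I must solve $\prod_{i=1}^{h}\gamma_i^{2}=\tau$, in each case with the new generators together with the fixed $\sigma_j$ generating at least $A_d$.

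The technical heart, and the step I expect to be the main obstacle, is this enabling lemma: for $d\ge5$ and any $\tau\in A_d$ one can write $\tau$ as the relevant product of commutators (resp.\ of squares) by permutations which, together with the prescribed nontrivial $\sigma_j$, generate a group containing $A_d$. When the genus is large there is slack: I let one commutator (resp.\ one square) absorb $\tau$ and use a further one to introduce a $d$-cycle and a $3$-cycle, after which Jordan's theorem forces $A_d$. The delicate cases are the minimal surfaces — the torus, where a single commutator must equal $\tau$, and the Klein bottle, where $\gamma_1^2\gamma_2^2=\tau$ — together with degenerate targets such as $\tau=1$, which makes $\langle\alpha_1,\beta_1\rangle$ abelian and hence forces the fixed branch permutations $\sigma_j$ to be recruited as auxiliary generators. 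In these situations the construction must be carried out by an explicit choice of $\alpha,\beta$ (resp.\ $\gamma_1,\gamma_2$) depending on the cycle type of $\tau$; this is a strengthening of Ore's theorem with control on the generated subgroup, and establishing it uniformly in $d$ and in the structure of $\tau$ is where the real work lies.

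Finally I would dispose of the small degrees $d\le4$, where $A_d$ is unavailable or not simple, by exhibiting primitive $\pi_1$-surjective realisations directly: the admissible data are then few, the primitive groups of degree $\le4$ are explicitly known, and one checks case by case, using that a $d$-cycle together with a suitable transposition or $3$-cycle generates $S_d$ or $A_d$ with transitive normal closure of the branch part. Assembling the pieces — the reduction of the two topological conditions to primitivity of $G$ and to transitivity of the normal closure of the $\sigma_j$, the sufficiency of $G\supseteq A_d$, the enabling lemma realising $\tau$ with large image, and the finite check in low degree — produces an indecomposable primitive branched covering realising $\mathscr{D}$ on the given $N$, which is exactly the assertion of the theorem.
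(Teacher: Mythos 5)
Your group-theoretic reduction is correct and coincides with the paper's own: indecomposability is equivalent to imprimitivity of the monodromy group (Proposition \ref{yo} together with Proposition \ref{dixon} and Lemma \ref{isotropy}), and your observation that a primitive monodromy group containing non-trivial branch permutations forces $\pi_1$-surjectivity (a non-trivial normal subgroup of a primitive group is transitive) is consistent with the paper's remark that indecomposable implies primitive. Your assembly step for higher genus --- absorb $\tau=(\sigma_1\cdots\sigma_k)^{-1}$ into one handle or pair of cross-caps, send the unused surface generators to useful permutations or to the identity --- is also essentially how the paper deduces Theorem \ref{nt3} from Theorem \ref{nt2}.

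The genuine gap is that your ``enabling lemma'' \emph{is} the theorem, and you never prove it; you explicitly defer it as ``where the real work lies.'' For the minimal surfaces with one branch point you need: for every even $\tau\neq 1$ of arbitrary prescribed cycle type, permutations $\alpha,\beta$ with $[\alpha,\beta]=\tau$ (torus), respectively $\gamma_1,\gamma_2$ with $\gamma_1^2\gamma_2^2=\tau$ (Klein bottle), such that the generated group contains $A_d$. This is a strengthened Ore-type statement with control of the generated subgroup, uniform in $d$ and in the cycle type of $\tau$; it is not an available black box, and nothing in your sketch establishes it (Ore's theorem gives the commutator but says nothing about generation, and Jordan's theorem needs primitivity as an input, which is not automatic for, say, a $d$-cycle plus an arbitrary $3$-cycle). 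This is precisely the content of the paper's Theorem \ref{nt2}, proved in Section 4 in the \emph{weaker} but sufficient form that $\langle\alpha,\beta\rangle$ is merely a primitive permutation group; that proof occupies the bulk of the paper, with explicit permutations adapted to the cycle type (cases $t=1$, $D=[2,\dots,2]$, and the general case) and a delicate block analysis (steps I)--VIII)). Note that aiming at $G\supseteq A_d$ instead of primitivity strengthens what must be proved without making the construction easier, which is presumably why the paper settles for primitivity. The same incompleteness affects your degenerate case $\tau=1$ (which, incidentally, the paper avoids altogether for $d>2$ by re-choosing the $\sigma_j$ within their cycle types so that the product is non-trivial) and your case-by-case check for $d\leq 4$, neither of which is carried out. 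As it stands, the proposal is a correct reduction plus a statement of the key lemma, not a proof.
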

We also
  characterize  admissible data realized by  a 
 decomposable primitive branched covering over $N$, {\it decomposable data on $N$},
 by defining a special factorization on it (see Section \ref{decomp}).
\begin{pps}
Admissible data $\mathscr{D}$  are decomposable on $N$,  with  $\chi(N)\leq 0$,
 if and only if there exists a factorization of $\mathscr{D}$
  such that its first factor is  non-trivial  admissible data.
\end{pps}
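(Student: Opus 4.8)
The plan is to exploit the arithmetic that a composition of branched coverings imposes on branch data. If $\phi$ factors as $M \xrightarrow{\rho} L \xrightarrow{\psi} N$ with $\deg\rho = d_1$, $\deg\psi = d_2$ and $d = d_1 d_2$, then for each $y \in N$ the partition of $\mathscr{D}$ at $y$ is assembled as follows: for every $x \in \psi^{-1}(y)$, with local degree $e_x$ of $\psi$, take the partition of $d_1$ that $\rho$ carries at $x$, multiply each of its parts by $e_x$, and amalgamate the results over all $x \in \psi^{-1}(y)$. The first factor of the resulting factorization is the branch data of $\psi$ on $N$ (partitions of $d_2$) and the second factor is the branch data of $\rho$ on $L$ (partitions of $d_1$); this is precisely the factorization of $\mathscr{D}$ introduced in Section \ref{decomp}. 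I would prove the two implications separately, the non-trivial content lying in the backward one.

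For necessity, suppose $\mathscr{D}$ is realized by a decomposable primitive branched covering $\phi = \psi \circ \rho$ with both factors non-trivial, so $1 < d_1, d_2 < d$. Reading off the branch data of $\psi$ yields a collection $\mathscr{D}_1$ of partitions of $d_2$ on $N$; since $\psi\colon L \lra N$ is an honest branched covering, $\mathscr{D}_1$ satisfies Hurwitz's condition, i.e. it is admissible, and it is non-trivial because $d_2 > 1$. Together with the branch data of $\rho$ on $L$ it constitutes, by the arithmetic above, a factorization of $\mathscr{D}$ whose first factor is non-trivial admissible data.

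For sufficiency, start from a factorization of $\mathscr{D}$ whose first factor $\mathscr{D}_1$ is non-trivial admissible data of degree $d_2$, with second factor $\mathscr{D}_2$ of degree $d_1 = d/d_2$; here $1 < d_2 < d$, so $d_1 > 1$ as well. By Husemoller \cite{Hu} and Ezell \cite{Ez}, together with \cite{BGKZ2} and \cite{BGKZ1}, realize $\mathscr{D}_1$ by a primitive branched covering $\psi\colon L \lra N$. The Riemann--Hurwitz formula gives $\chi(L) = d_2\,\chi(N) - R_\psi$ with $R_\psi \geq 0$, so $\chi(N) \leq 0$ forces $\chi(L) \leq 0$ and the realization theorem applies over $L$ as well. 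The key point is that $\mathscr{D}_2$ is then admissible on $L$: since the total ramification of $\phi$ equals $d_1 R_\psi$ plus the total ramification of $\rho$, the Riemann--Hurwitz identity for $\mathscr{D}_2$ on $L$ follows from those already known for $\mathscr{D}$ on $N$ and for $\mathscr{D}_1$ on $N$. Granting admissibility of $\mathscr{D}_2$, realize it by a primitive branched covering $\rho\colon M \lra L$, which is non-trivial because $d_1 > 1$. The composite $\phi = \psi \circ \rho$ has degree $d$, realizes $\mathscr{D}$ by the composition arithmetic, is decomposable by construction, and is primitive because $\phi_* = \psi_* \circ \rho_*$ is a composition of surjections on fundamental groups; hence $\mathscr{D}$ is decomposable on $N$.

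The step I expect to resist is the admissibility of the second factor on $L$. The Euler-characteristic bookkeeping reduces to the additivity of ramification recorded above and is routine, but one must still check that the remaining, non-numerical part of Hurwitz's condition descends to $\mathscr{D}_2$ — in particular that the intermediate surface $L$ obtained by realizing $\mathscr{D}_1$ has the orientability class on which $\mathscr{D}_2$ is realizable. I would handle this by showing that the relevant invariants of $L$ are forced by the factorization, or, if some freedom remains, by choosing among the realizations of $\mathscr{D}_1$ one whose intermediate surface makes $\mathscr{D}_2$ admissible; the hypothesis $\chi(N) \leq 0$, which eliminates the number-theoretic obstructions present in the spherical Hurwitz problem, is what makes this selection possible.
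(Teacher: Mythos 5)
Your strategy coincides with the paper's (read off the two branch data in one direction; realize the two factors and compose in the other), but it has two genuine gaps, both sitting exactly where the paper has to do real work. In the necessity direction you justify non-triviality of the first factor by ``$d_2>1$''. That is not a valid reason: a covering of degree $d_2>1$ can be unbranched, in which case its branch data $\mathscr{D}_1$ is trivial and the factorization you produce fails the requirement. What rules this out is primitivity of $\phi$, which you never invoke in this direction: if $\psi\colon L\to N$ were unbranched of degree $d_2>1$, then $\psi_{\#}(\pi_1(L))$ would be a proper subgroup of $\pi_1(N)$ of index $d_2$, so $\phi_{\#}=\psi_{\#}\circ\rho_{\#}$ could not be surjective. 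Hence $\psi$ must actually be branched; this is precisely how the paper argues that $B_{\eta}\neq\emptyset$.

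The second gap is more serious. In the sufficiency direction you assume $\mathscr{D}_2$ is realized by a \emph{primitive branched} covering $\rho\colon M\to L$ and deduce primitivity of the composite as a composition of surjections on $\pi_1$. But a legitimate factorization can have trivial second factor: for instance $\mathscr{D}=\{[2,2],[2,2]\}$, $d=4$, factors with $\mathscr{U}=\{[2],[2]\}$ (non-trivial, admissible) and $\mathscr{W}$ consisting only of trivial partitions, and one checks this is the \emph{only} factorization of these data with non-trivial admissible first factor. Trivial data are not realized by any branched covering: $\rho$ must then be an honest unbranched covering of degree $d_1>1$, whose induced map on $\pi_1$ has image of index $d_1$ and is never surjective, so the composition-of-surjections argument collapses. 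To keep the composite primitive one must choose the index-$d_1$ subgroup $H<\pi_1(L)$ so that $\psi_{\#}|_{H}$ is still onto $\pi_1(N)$; this is exactly the paper's Proposition \ref{indice}, a genuine lemma whose proof requires care in the non-orientable case (matching the $\mathbb{Z}_2$ torsion of the abelianized fundamental groups). Finally, your closing worry about an ``orientability class'' obstruction for $\mathscr{D}_2$ on $L$ is vacuous in this paper's setting: admissibility is by definition the parity condition $\nu\equiv 0 \pmod{2}$, which Proposition \ref{fator} hands to you, and Theorem \ref{D} realizes non-trivial admissible data on \emph{any} surface with $\chi\leq 0$, orientable or not.
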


The problem we solve here provides a contribution for the understanding and  for a possible classification of  branched coverings. After the  realization results provided by
\cite{Hu} and  \cite{EKS}, a substantial contribution was obtained in \cite{BGKZ2}, \cite{BGKZ1}, \cite{BGZ}, \cite{GKZ} by solving the realization problem under the hypothesis 
that the covering is primitive, i.e. the induced map on the fundamental group is surjective. Now we further explore this realization  type of result  studying the 
decomposability by possibly indecomposable branched coverings. 
Also our problem is related with the Inverse Galois problem (see  for example the references \cite{Muller} and  \cite{GN}) and with a construction of primitive and imprimitive monodromy groups as treated in \cite{LZ}. Besides the facts mentioned  above,  this problem seems interesting in its own right.

The paper is divided into  four sections. In Section 1, we quote the main
definitions and some results related to branched coverings. In Section 2, we characterize the 
branch data of a  decomposable primitive 
branched covering. In Section 3, we assert that if $N$ is either the torus or the  Klein bottle, an admissible
partition is realized on $N$ by an indecomposable primitive branched covering. Then we generalize it for every admissible data and any  $N$ with $\chi(N)\leq 0$.
In Section 4 we prove the assertion in Section 3.

{\bf Acknowledgements} The second author would like to express his gratitude to Elena Kudryavtseva and Semeon Bogaty{\u\i} from Lomonosov Moscow State University, for many helpful 
and fruitful conversations about these problems    during his visit to the  Chair  of  Differential Geometry and Applications-MSU,  May 2002.  He also would like to thank the 
warm hospitality of the Chair of  Differential Geometry and Applications  during his visit. 
\section{Preliminaries, terminology and notation}
  \subsection{On permutation groups}
We denote by $\Sigma_{d}$ the symmetric group on a set $\Omega$ with $d$ elements and
 by $1_{d}$ its identity element. If $\alpha \in \Sigma_{d}$ and $x \in
 \Omega$, $x^{\alpha}$ is the image of $x$ by $\alpha$. An explicit permutation $\alpha$ will be written either as a product of disjoint cycles, i.e. its \emph{cyclic decomposition},  or in the following way:
 \begin{displaymath}
 \mathbf{\alpha=} 
\left( \begin{array}{ccccccc}
1 & 2& \dots & 2k+1\\ 
1^{\alpha} &2^{\alpha}& \dots& (2k+1)^{\alpha}
\end{array} \right),
\end{displaymath}   
it depends on our convenience. The set of lengths of the
 cycles in the cyclic decomposition of $\alpha$, including the trivial ones, defines a partition of $d$, say
 $D_{\alpha}=[d_{\alpha_{1}},\dots,d_{\alpha_{t}}]$,  called 
 \emph{the cyclic structure of} $\alpha$. Define
 $\nu(\alpha):=\sum_{i=1}^{t}(d_{\alpha_{i}}-1)$, then $\alpha$ will be an \emph{even permutation} if $\nu(\alpha) \equiv 0 \pmod{2}$.
Given  a partition $D$ of $d$, we say  $\alpha \in
D$ if the cyclic structure of $\alpha$ is
$D$ and  we put $\nu(D):=\nu(\alpha)$.

For $1<r \leq d$, a permutation $\alpha \in \Sigma_{d}$ is called a
$r$-\emph{cycle} if in its cyclic decomposition its unique non-trivial cycle
has 
length $r$. Permutations $\alpha,\beta \in \Sigma_{d}$ are \emph{conjugate}
if there is $\lambda \in \Sigma_{d}$  such that
$\alpha^{\lambda}:=\lambda \alpha \lambda^{-1}=\beta$. It is a known fact that
conjugate permutations have the same cyclic structure.  

  Given a permutation group $G$ on $\Omega$  and $x \in \Omega$,  one  defines the {\it isotropy subgroup of $x$},
 $G_{x}:=\{g \in G: x^{g}=x\}$ , and the {\it orbit of $x$ by $G$}, $x^{G}:=\{x^{g}:g\in G\}$.

 For $H \subset G$,  the subsets $Supp(H):=\{x
 \in \Omega: x^{h} \neq x \textrm{ for some $h \in H$}\}$ and
 $Fix(H):=\{x \in \Omega:x^{h}=x \textrm{ for all $h \in H$}\}$ are defined.
 For $\Lambda \subset \Omega$ and $g \in G$,
 $\Lambda^{g}:=\{y^{g}:y \in \Lambda\}$.
 
  $G$ is said to be
 \emph{transitive} if for all
 $x,y \in \Omega$ there is $g \in G$ such that
 $x^{g}= y$.
 A nonempty subset $\Lambda \subset \Omega$ is a \emph{block} of
 a transitive $G$ if
 for each $g \in G$ either $\Lambda^{g} = \Lambda$ or
 $\Lambda^{g} \cap \Lambda =\emptyset$. A block $\Lambda$ is
 \emph{trivial} if either $\Lambda =\Omega$ or $\Lambda=\{ x \}$ for some $x \in
 \Omega$. Given a block $\Lambda$ of $G$, the set
 $\Gamma:=\{\Lambda^{\alpha}:\alpha \in G\}$ defines a partition of $\Omega$
 in blocks. This set is called \emph{a system of blocks
 containing} $\Lambda$ and the cardinality of $\Lambda$ divides the cardinality of $\Omega$. $G$ acts naturally on $\Gamma$.
 A transitive permutation group is
 \emph{primitive} 
 if it determines only  trivial blocks. Otherwise it is
 \emph{imprimitive}.
\begin{exe}\label{e1}
A transitive permutation group $G<\Sigma_{d}$ containing a  $(d-1)$-cycle is 
  primitive.  For, without loss of generality let us suppose that
 $g=(1\dots d-1)(d) \in G$. Then   
 any  proper subset $\Lambda$ of $\{1,\dots,d\}$ containing $d$ and at least one more element satisfies $\Lambda^{g}\neq \Lambda$ and
$\Lambda^{g}\cap \Lambda \neq \emptyset$. Thus the blocks
of  $G$ are trivial and $G$ is primitive.
\end{exe}

\begin{exe}\label{REF}
 If $gcd(\ell, d)=1$ and $\ell$ is greater than any non-trivial divisor of $d$ then any transitive permutation group $G < \Sigma_d$
 containing an $\ell$-cycle is primitive $($this holds, for example, if $d=2\ell\pm1$  $)$. In fact, we can assume that $G$ contains the cycle $(1,\dots,\ell)$. If there is a block of $G$ containing two elements $i$ and $j$ with $i \leq \ell$ and $j>\ell$ then it also contains $1,\dots,\ell$, thus the cardinality of the block is $\geq\ell+1$. Hence it equals $d$ and the block is trivial. Otherwise the cardinality of each block divides both $\ell$ and $d-\ell$, hence it equals 1, thus all blocks of $G$ are trivial. Hence $G$ is a primitive permutation group.
\end{exe}
\begin{prop}[\cite{DM}, Cor. 1.5A]\label{dixon}
Let $G$ be a transitive permutation group on a set $\Omega$ with at least two
points. Then $G$ is primitive if and only if each isotropy subgroup $G_{x}$,
for $x \in \Omega$,
is a maximal subgroup of $G$. \qed
\end{prop}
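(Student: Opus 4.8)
The plan is to set up a Galois-type correspondence between the blocks of $G$ containing a fixed point and the subgroups lying between the corresponding stabilizer and $G$, and then read primitivity directly off this correspondence. First I would fix $x \in \Omega$ and reduce the primitivity question to blocks through $x$: since $G$ is transitive, an arbitrary block $\Lambda$ can be moved by a suitable $g \in G$ to a block $\Lambda^{g}$ that contains $x$ and has the same cardinality, so $\Lambda$ is trivial if and only if $\Lambda^{g}$ is. Hence $G$ is primitive if and only if every block containing $x$ is trivial. I would also record that transitivity gives $G_{x^{g}}=g^{-1}G_{x}g$, so that the maximality of the isotropy subgroup is independent of the chosen point, and it suffices to argue for the fixed $x$.

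For the correspondence itself, to a block $\Lambda$ with $x \in \Lambda$ I assign its setwise stabilizer $H_{\Lambda}:=\{g \in G : \Lambda^{g}=\Lambda\}$, and to a subgroup $H$ with $G_{x}\leq H \leq G$ I assign the orbit $x^{H}$. The inclusion $G_{x}\subseteq H_{\Lambda}$ is immediate: if $x^{g}=x$ then $x \in \Lambda \cap \Lambda^{g}$, so the defining property of a block forces $\Lambda^{g}=\Lambda$. I would then check that the two assignments are mutually inverse. The identity $x^{H_{\Lambda}}=\Lambda$ follows because $H_{\Lambda}$ fixes $\Lambda$ setwise (giving $\subseteq$), while for any $y \in \Lambda$ transitivity yields $g$ with $x^{g}=y$, so $y \in \Lambda \cap \Lambda^{g}$ forces $g \in H_{\Lambda}$ and hence $y \in x^{H_{\Lambda}}$.

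The one genuine verification, and the step I expect to be the crux, is that $x^{H}$ is actually a block whenever $G_{x}\leq H \leq G$. Suppose $x^{H}\cap (x^{H})^{g}\neq \emptyset$ for some $g \in G$, and choose $h_{1},h_{2}\in H$ with $x^{h_{1}}=x^{h_{2}g}$. Then $h_{1}(h_{2}g)^{-1}\in G_{x}\subseteq H$, whence $g \in H$ and therefore $(x^{H})^{g}=x^{H}$; so either the two translates are disjoint or they coincide. This is precisely the place where the hypothesis $G_{x}\leq H$ is indispensable, and everything else is routine bookkeeping.

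Finally, both assignments preserve inclusions and send the two trivial blocks through $x$ to the two extreme subgroups: $\{x\}=x^{G_{x}}$ corresponds to $H=G_{x}$, and $\Omega=x^{G}$ corresponds to $H=G$. Consequently, every block containing $x$ is trivial if and only if there is no subgroup strictly between $G_{x}$ and $G$, that is, if and only if $G_{x}$ is maximal. Combined with the reduction carried out in the first step, this yields the stated equivalence.
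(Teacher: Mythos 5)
Your proof is correct. Note, however, that the paper contains no proof of this proposition to compare against: it is quoted as a known result from Dixon and Mortimer \cite{DM} (their Corollary 1.5A), with the \qed indicating the proof is deferred to that reference. Your argument --- the inclusion-preserving correspondence $\Lambda \mapsto H_{\Lambda}$ (setwise stabilizer) and $H \mapsto x^{H}$ between blocks containing $x$ and subgroups lying between $G_{x}$ and $G$ --- is precisely the standard proof found in that reference, so in effect you have reconstructed the cited argument. One remark: you verify $x^{H_{\Lambda}}=\Lambda$ explicitly but leave the other half of the mutual-inverse check, $H_{x^{H}}=H$, under ``routine bookkeeping''; this half is genuinely needed at the end (to rule out $x^{H}=\Omega$ for a proper intermediate subgroup $H$, i.e.\ that $x^{H}=\Omega$ forces $H=G$), and it follows by the same mechanism as your crux step: if $(x^{H})^{g}=x^{H}$ then $x^{g}=x^{h}$ for some $h \in H$, so $gh^{-1}\in G_{x}\leq H$ and hence $g \in H$. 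With that line added, the proof is complete.
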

\subsection{On branched coverings between closed surfaces}\label{pbc}
A surjective continuous open map $\phi:M \lra N$ between closed surfaces such 
that:
\begin{itemize}
\item for $x \in N$, $\phi^{-1}(x)$ is a totally disconnected set, and
\item there is a non-empty discrete set $B_{\phi} \subset N$  such that the 
restriction \break $\hat{\phi}:=\phi|_{M-\phi^{-1}(B_{\phi})}$ is an ordinary
unbranched 
covering of degree $d$,
\end{itemize}
is called a \emph{branched covering of degree d over N}
 and it is  denoted  by
$(M,\phi,N,B_{\phi},d)$.  $N$ is \emph{the base surface}, $M$ is
\emph{the covering surface} and $B_{\phi}$ is \emph{the branch point set}. Its \emph{associated unbranched covering} is denoted by 
$(\widehat{M},\hat{\phi},\widehat{N},d)$, where
$\widehat{N}:=N-B_{\phi}$ and $\widehat{M}:=M-\phi^{-1}(B_{\phi})$.
The set $B_{\phi}$ is  just
the image of the points in $M$ in which $\phi$ fails to be a local
homeomorphism, then each $x \in B_{\phi}$ determines a non-trivial
partition $D_{x}$ of $d$, defined by the local degrees of $\phi$ on each component
in the preimage of a small disk $U_{x}$ around  $x$, with
$U_{x}\cap B_{\phi}=\{x\}$. The collection  $\mathscr{D}:=\{D_{x}\}_{x\in B_{\phi}}$ is called
the  \emph{branch data } $\mathscr{D}$ and its  \emph{total defect} is the positive integer
defined by 
$\nu({\mathscr{D}}):=\sum_{x \in B_{\phi}} \nu(D_{x})$. The total defect  satisfies
the \emph{Riemann-Hurwitz formula} (see \cite{EKS}): 
\begin{eqnarray}\label{rhf}
\nu(\mathscr{D})=d \chi(N)-\chi(M).
\end{eqnarray}

Associated to $(M,\phi,N,B_{\phi},d)$ we have a permutation group, {\it the monodromy group of $\phi$}, given by the
image of the \emph{Hurwitz's representation}
\begin{eqnarray}\label{Hr}
 \rho_{\phi}: \pi_{1}(N-B_{\phi},z) \lra \Sigma_{d}
 \end{eqnarray}
that sends each class $\alpha \in \pi_{1}(N-B_{\phi},z)$ to the permutation of $\phi^{-1}(z)=\{z_{1},
\dots,z_{d}\}$, 
which indicates the terminal point of the lifting  of  a loop
in $\alpha$ after fixing  the
initial point. In particular,  for $x \in B_{\phi}$, let $c_{x}$
be a path
 from $z$ to a small circle $a_{x}$ about $x$ and define the loop class 
 $\mathbf{u}_{x}:=[c_{x}a_{x}c_{x}^{-1}]$. Then the cyclic structure of the permutation 
 $\alpha_{x}:=\rho_{\phi}(\mathbf{u}_{x})$ is given by $D_{x}$ and 
$\nu(\prod_{x \in B_{\phi}} \alpha_{x}) \equiv \nu(\mathscr{D}) \pmod{2}$.

In the sequel,  $N$ will denote a connected closed surface with $\chi(N)\leq 0$. Then $N$ is either
 the connected sum of $g\geq 1$ tori, $N=T_{g}$, or  the connected
 sum  of 
 $g\geq 2$ projective planes, $N=P_{g}$. If $B_{\phi}=\{x_{1},\dots,x_{t}\}$, we
 adopt the following presentations for the respective
 fundamental groups:
\begin{displaymath} \label{gf1}
\begin{small}
\pi_{1}(N-B_{\phi},z)= \left\{ \begin{array}{ll}
\langle \mathbf{u}_{x_{1}},... ,\mathbf{u}_{x_{t}},
a_{1},b_{1},... ,a_{g},b_{g} |
\mathbf{u}_{x_{1}}... \mathbf{u}_{x_{t}}[a_{1},b_{1}]
... [a_{g},b_{g}]=1 \rangle &, \textrm{$N=T_{g}$} \\
\langle \mathbf{u}_{x_{1}},... ,\mathbf{u}_{x_{t}}, a_{1}, ...,
a_{g} | \mathbf{u}_{x_{1}} ... \mathbf{u}_{x_{t}}a_{1}^{2} ...
a_{g}^{2}=1
  \rangle &, \textrm{$N= P_{g}$} \\
\end{array} \right.
\end{small}
\end{displaymath}
 In the special case $N=P_{2}$ and
 $B_{\phi}=\{x\}$, we work also
 with the presentation $\langle \mathbf{u}_{x}, a_{1},
a_{2} | \mathbf{u}_{x}a_{1}a_{2}a_{1}a_{2}^{-1}=1 \rangle$ and this
 will be clear in the context.
Note that 
 $\rho_{\phi}(\prod_{i=1}^{t}\mathbf{u}_{x_{i}})$ is always an even permutation. This
 necessary condition is known as \emph{Hurwitz's condition} and it is
 equivalent to:
\begin{eqnarray}\label{hc2}
 \nu(\mathscr{D}) \equiv 0
 \pmod{2}.
\end{eqnarray}
 
 \begin{thm}[See \cite{Hurwitz}]
 Given $N$ and a finite collection $\mathscr{D}$ of partitions  of $d$, if it is possible to define a representation $\pi_{1}(N-F,z) \lra \Sigma_{d}$ like $\rho_{\phi}$ such that its image is a transitive permutation group, where 
$F\subset N$ is a finite set with the same cardinality as $\mathscr{D}$, then $\mathscr{D}$ is realizable on $N$.\qed
\end{thm}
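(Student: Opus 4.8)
The plan is to realize $\mathscr{D}$ by first producing the associated unbranched covering from the given permutation representation and then filling in the branch points by a completion procedure. Write $F=\{x_1,\dots,x_t\}$, set $\pi:=\pi_1(N-F,z)$, and let $\rho:\pi\lra\Sigma_d$ be the given representation with transitive image $G$. Via $\rho$ the group $\pi$ acts on $\Omega=\{1,\dots,d\}$, and by the standard classification of covering spaces this action determines a $d$-sheeted unbranched covering $\hat{\phi}:\widehat{M}\lra N-F$: concretely one forms $\widehat{M}:=\widetilde{(N-F)}\times_{\pi}\Omega$, the quotient of the product of the universal cover with $\Omega$ by the diagonal $\pi$-action, and projects onto the first factor. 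Because $\rho$ is assumed transitive, $\widehat{M}$ is connected. By construction its Hurwitz representation is exactly $\rho$, so the monodromy of each loop $\mathbf{u}_{x_i}$ is the permutation $\alpha_{x_i}=\rho(\mathbf{u}_{x_i})$, whose cyclic structure is the prescribed partition $D_{x_i}$. Note that the relation defining $\pi$ forces $\prod_i\alpha_{x_i}$ to be a product of commutators (respectively of squares), so Hurwitz's condition is already encoded in the existence of $\rho$ and need not be checked separately.

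Next I would analyze the covering near each puncture and fill it in. Fix $x=x_i$ and let $U_x$ be a small disk about $x$ with $U_x\cap F=\{x\}$, so that $U_x-\{x\}$ is an annulus with $\pi_1$ generated by $\mathbf{u}_x$. The restriction of $\hat{\phi}$ over $U_x-\{x\}$ splits into connected components, one for each cycle of $\alpha_x$: a cycle of length $k$ yields a connected $k$-fold cover of the punctured disk, which is again a punctured disk, covered by the model map $w\mapsto w^{k}$. I would then form $M$ by the Fox completion, adjoining one point to $\widehat{M}$ for each such component and extending $\hat{\phi}$ across it by the local model $w\mapsto w^{k}$. One checks that this endows every added point with a genuine disk neighbourhood, so that $M$ is a closed surface, and that the extension $\phi:M\lra N$ is continuous, open and surjective, with totally disconnected fibers and branch set contained in $F$.

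Finally I would verify that $\phi$ realizes $\mathscr{D}$. Over $N-F$ the map $\hat{\phi}$ is an honest covering of degree $d$ by construction, so the two defining properties of a branched covering hold with $B_{\phi}\subseteq F$. At the point lying over $x$ that comes from a $k$-cycle the local degree equals $k$, hence the partition recorded at $x$ is precisely the cyclic structure of $\alpha_x$, namely the prescribed member $D_x$ of $\mathscr{D}$; since the partitions in $\mathscr{D}$ are non-trivial, each $x_i$ is a genuine branch point and $B_{\phi}=F$. Connectedness of $M$ follows from that of $\widehat{M}$, and compactness from the compactness of $N$ together with finiteness of the fibers, so $(M,\phi,N,F,d)$ is a branched covering with branch data $\mathscr{D}$. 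The one delicate step is the completion: one must confirm that filling the punctured-disk components gives a genuine closed surface and a map locally modelled on $w\mapsto w^{k}$, and it is exactly the annular (planar) structure of the fibers over $F$ that makes this possible.
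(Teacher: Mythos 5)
The paper itself offers no proof of this statement: it is quoted as a known theorem, with the \qed standing in for the citation to Hurwitz's 1891 paper. So there is no internal argument to compare against; what you have written is precisely the classical proof that the citation represents, and it is correct in substance: build the $d$-sheeted covering $\widehat{M}=\widetilde{(N-F)}\times_{\pi}\Omega$ from the monodromy action (connected by transitivity), observe that over a punctured disk about $x_i$ the components correspond to the cycles of $\alpha_{x_i}$ and each length-$k$ cycle gives the model cover $w\mapsto w^{k}$, then fill in one point per component (Fox completion) to get a closed surface $M$ and a degree-$d$ branched covering realizing $\mathscr{D}$. Your observation that Hurwitz's parity condition is automatically encoded in the existence of $\rho$, via the surface relation, is also correct, though not needed for the argument.

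One detail deserves a cleaner justification: compactness of $M$ does not follow from ``compactness of $N$ together with finiteness of the fibers'' as a general principle (consider $[0,1)\to S^{1}$, $t\mapsto e^{2\pi i t}$, which is injective, continuous and onto a compact space). The correct route is the one your own local analysis provides: write $N$ as the union of the closed disks $\overline{U}_{x_i}$ and the compact complement $N-\bigcup_i U_{x_i}$; the preimage of the latter is a compact $d$-fold cover, and the preimage of each $\overline{U}_{x_i}$ in $M$ is a finite union of closed disks by the completion, so $M$ is a finite union of compact sets. Equivalently, the local model $w\mapsto w^{k}$ makes $\phi$ proper. With that repair the proof is complete.
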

\begin{thm}[In \cite{Hu} the orientable case. In
\cite{Ez} the nonorientable case.] 
If $\mathscr{D}$ is admissible  then $\mathscr{D}$ is 
realizable on any $N$ with $\chi(N) \leq0$.\qed
\end{thm}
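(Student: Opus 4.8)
The plan is to reduce the statement to the Hurwitz realization criterion stated just above and then to a single elementary fact about products of $d$-cycles. By that criterion it suffices to produce a homomorphism $\rho\colon \pi_{1}(N-F,z)\to\Sigma_{d}$, with $|F|=|\mathscr{D}|=t$, whose image is transitive and which sends each meridian $\mathbf{u}_{x_{i}}$ to a permutation $\alpha_{i}$ with cyclic structure $D_{x_{i}}$. So first I would fix, for each $i$, an arbitrary $\alpha_{i}\in\Sigma_{d}$ with $\alpha_{i}\in D_{x_{i}}$, and record that their product $P:=\alpha_{1}\cdots\alpha_{t}$ is an \emph{even} permutation: by admissibility $\nu(P)\equiv\sum_{i}\nu(\alpha_{i})=\nu(\mathscr{D})\equiv 0\pmod 2$. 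The whole problem then becomes extending the assignment $\mathbf{u}_{x_{i}}\mapsto\alpha_{i}$ over the genus generators so that the single defining relation of $\pi_{1}(N-F,z)$ holds and the image is transitive.

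The key algebraic input I would isolate is the classical cycle lemma: for $d\geq 2$, every even permutation of $\Sigma_{d}$ is a product of two $d$-cycles. Granting it, write $P^{-1}=\gamma_{1}\gamma_{2}$ with $\gamma_{1},\gamma_{2}$ both $d$-cycles. For $N=T_{g}$ (so $g\geq 1$) the relation forces $\rho([a_{1},b_{1}]\cdots[a_{g},b_{g}])=P^{-1}$; I would set $\rho(a_{j})=\rho(b_{j})=1_{d}$ for $j\geq 2$ and seek $A_{1}:=\rho(a_{1})$, $B_{1}:=\rho(b_{1})$ with $[A_{1},B_{1}]=P^{-1}$. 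Taking $A_{1}:=\gamma_{1}$ gives $[A_{1},B_{1}]=\gamma_{1}\,(B_{1}\gamma_{1}^{-1}B_{1}^{-1})$, so it remains to pick $B_{1}$ with $B_{1}\gamma_{1}^{-1}B_{1}^{-1}=\gamma_{2}$, which exists because $\gamma_{1}^{-1}$ and $\gamma_{2}$ are conjugate $d$-cycles; then $[A_{1},B_{1}]=\gamma_{1}\gamma_{2}=P^{-1}$.

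For $N=P_{g}$ (so $g\geq 2$) the relation forces $\rho(a_{1}^{2}\cdots a_{g}^{2})=P^{-1}$; here I would set $\rho(a_{j})=1_{d}$ for $j\geq 3$ and use the substitution $\rho(a_{1})=A_{1}$, $\rho(a_{2})=A_{1}^{-1}c$ with $c$ a $d$-cycle, giving $\rho(a_{1}^{2}a_{2}^{2})=A_{1}c A_{1}^{-1}c$, again a product of two $d$-cycles. Choosing $c:=\gamma_{2}$ and $A_{1}$ with $A_{1}\gamma_{2}A_{1}^{-1}=\gamma_{1}$ (possible for the same conjugacy reason) makes this equal to $\gamma_{1}\gamma_{2}=P^{-1}$. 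In both cases the image of $\rho$ contains a $d$-cycle, namely $A_{1}=\gamma_{1}$ in the orientable case and $c=\rho(a_{1}a_{2})$ in the non-orientable one, so it is automatically transitive; since moreover $\rho(\mathbf{u}_{x_{i}})=\alpha_{i}\in D_{x_{i}}$ and the defining relation holds by construction, $\rho$ is a well-defined homomorphism of the required type, and the Hurwitz criterion yields the branched covering realizing $\mathscr{D}$ on $N$. The degenerate case of empty branch data is handled directly by an unbranched covering whose monodromy image contains a $d$-cycle.

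The one genuinely non-formal step, and hence the main obstacle, is the cycle lemma that every even permutation is a product of two $d$-cycles; everything else is bookkeeping with the two surface presentations. I would prove it by induction on the number of non-fixed points, multiplying $\sigma$ by a suitable $d$-cycle to simplify its cyclic structure and checking the few small values of $d$ directly (this is the classical statement that the square of the conjugacy class of $d$-cycles exhausts the even permutations). The attractive feature of this route is that it treats the orientable and non-orientable cases \emph{uniformly}: the change of generators $a_{2}\mapsto a_{1}^{-1}c$ replaces the usual "product of two squares" analysis in the $P_{g}$ case by exactly the same $d$-cycle statement used for $T_{g}$.
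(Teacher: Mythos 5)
Your proposal is correct, and it is worth noting that the paper itself offers no proof of this statement at all: it is quoted as a known result of Husemoller (orientable case) and Ezell (nonorientable case), with a \qed attached to the citation. What you have written is therefore a self-contained reconstruction, and it checks out. The reduction to the Hurwitz realization criterion is exactly right; the parity computation $\nu(P)\equiv\nu(\mathscr{D})\equiv 0 \pmod 2$ is valid because $\mathrm{sgn}(\alpha)=(-1)^{\nu(\alpha)}$ is multiplicative; the commutator identity $[\gamma_{1},B_{1}]=\gamma_{1}\bigl(B_{1}\gamma_{1}^{-1}B_{1}^{-1}\bigr)=\gamma_{1}\gamma_{2}$ and the substitution $\rho(a_{2})=A_{1}^{-1}c$ giving $\rho(a_{1}^{2}a_{2}^{2})=A_{1}cA_{1}^{-1}c=\gamma_{1}\gamma_{2}$ are both correct; transitivity is indeed automatic from the presence of a $d$-cycle in the image; and your construction uses precisely the hypothesis $\chi(N)\leq 0$ (existence of $a_{1},b_{1}$ for $T_{g}$, $g\geq 1$, and of $a_{1},a_{2}$ for $P_{g}$, $g\geq 2$), consistent with the known failure of realizability on $S^{2}$. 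The one load-bearing step you do not actually prove is the cycle lemma --- every even permutation of $\Sigma_{d}$, $d\geq 2$, is a product of two $d$-cycles. That statement is true and classical (it is essentially Bertram's theorem on products of conjugate cycles, and it is the engine of Husemoller's original argument), but your inductive sketch is where all the genuine combinatorial work of the cited papers lives, so in a fully self-contained write-up that lemma would need to be proved or precisely referenced. Granting it, your route has a real advantage over the original treatments: the change of generators $a_{2}\mapsto a_{1}^{-1}c$ makes the nonorientable case formally identical to the orientable one, replacing Ezell's separate analysis of $a_{1}^{2}\cdots a_{g}^{2}$ by the same two-$d$-cycle factorization, which is also in the spirit of how the present paper handles $P_{2}$ via the alternative relator $\mathbf{u}_{x}a_{1}a_{2}a_{1}a_{2}^{-1}$ in its proof of Theorem 3.2.
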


\begin{thm}[In \cite{BGKZ1} geometrically. In  \cite{BGKZ2} algebraically]\label{D}
Any non-trivial admissible data $\mathscr{D}$ are realizable 
by a primitive branched covering on any  $N$ with $\chi(N)\leq 0$.\qed
\end{thm}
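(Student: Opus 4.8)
The plan is to translate primitivity of $\phi$ into a group-theoretic statement about a Hurwitz representation, and then to construct such a representation explicitly.

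First I would use the standard dictionary: realizing $\mathscr{D}$ on $N$ is the same as giving $\rho=\rho_\phi:\pi_1(N-B_\phi,z)\lra\Sigma_d$ with transitive image $G$ (transitivity $\Leftrightarrow$ $M$ connected) and $\rho(\mathbf{u}_{x_i})=\alpha_i\in D_{x_i}$. Let $p:\pi_1(N-B_\phi,z)\lra\pi_1(N,z)$ be the surjection killing the meridians $\mathbf{u}_{x_i}$, let $G_1$ be a point stabilizer and $H:=\rho^{-1}(G_1)$ the subgroup of the associated unbranched covering. Since $\pi_1(M)$ is $\pi_1(\widehat M)=H$ modulo meridians of $\phi^{-1}(B_\phi)$, all of which lie in $\Ker p$, one gets $\phi_*(\pi_1(M))=p(H)$, so $\phi$ is primitive iff $p(H)=\pi_1(N)$, i.e. $H\cdot\Ker p=\pi_1(N-B_\phi)$. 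Applying $\rho$ and using $\rho(\Ker p)=K$, the normal closure $K:=\langle\langle\alpha_1,\dots,\alpha_t\rangle\rangle_G$, this reads $G_1\cdot K=G$; since $\Omega\cong G/G_1$ as $G$-sets,
\[
\phi \ \text{primitive} \iff K=\langle\langle\alpha_1,\dots,\alpha_t\rangle\rangle_G \ \text{acts transitively on } \Omega.
\]
Equivalently, $\phi$ is primitive iff it does not factor through a non-trivial unbranched covering of $N$ (the one cut out by the $K$-orbits). The problem thus reduces to realizing $\mathscr{D}$ with branch monodromies of transitive normal closure.

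Second, I would construct such a $\rho$ by hand, using that $\chi(N)\le0$ forces at least one handle ($N=T_g$, $g\ge1$) or at least two cross-caps ($N=P_g$, $g\ge2$). The idea is to let one geometric generator act as a \emph{spreader} $c$: picking a non-trivial cycle of $\alpha_1$, if $c$ carries its support across all of $\Omega$ then the conjugates $c^k\alpha_1c^{-k}$ already force $K$ transitive. For $N=T_g$ with $g\ge2$ this is painless: put $a_1=c$ with $[a_1,b_1]=1$, solve $[a_2,b_2]=(\alpha_1\cdots\alpha_t)^{-1}$ (possible since admissibility makes $\alpha_1\cdots\alpha_t$ even and every even permutation is a commutator in $\Sigma_d$), and make the remaining handles trivial. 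For $N=P_g$ with $g\ge2$ the same scheme runs with squares: take $a_1=c$ and solve the residual equation $a_2^2\cdots a_g^2=(\alpha_1\cdots\alpha_t)^{-1}a_1^{-2}$ using that an even permutation is a product of squares.

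Third come the tight cases. For the torus $T_1$ there is a single handle, so one pair $a_1,b_1$ must simultaneously satisfy $[a_1,b_1]=(\alpha_1\cdots\alpha_t)^{-1}$ and act as spreaders; here I would realize the required commutator with $a_1$ chosen close to a $d$-cycle and check transitivity of $K$ directly. The surface $P_2$, and in particular the one-branch-point sub-case, I would treat through the alternative presentation $\langle\mathbf{u}_x,a_1,a_2\mid\mathbf{u}_xa_1a_2a_1a_2^{-1}=1\rangle$ together with a few low-degree constructions. The main obstacle I anticipate is exactly this coordination for sparse data in low genus: when $\mathscr{D}$ has a single branch point with one short non-trivial cycle, all transitivity of $K$ must be manufactured by conjugation, yet the conjugator is pinned down by the single commutator (resp. square) equation. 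One must show a compatible spreader always exists and that one never gets trapped with the $K$-orbits forming a proper block system --- the phenomenon already visible for $d=4$, $c=(1\,2\,3\,4)$, $\alpha_1=(1\,3)(2\,4)=c^2$, where $K$ stays intransitive. Proving, uniformly in $d$ and for both families of surfaces, that the cycle-length/conjugator arithmetic can be reconciled with solvability of the defining relation is the crux.
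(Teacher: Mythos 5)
Your opening reduction --- ``$\phi$ primitive $\iff$ the normal closure $K=\langle\langle\alpha_1,\dots,\alpha_t\rangle\rangle_G$ of the branch permutations acts transitively on $\Omega$'' --- is correct, and it is the standard algebraic reformulation. Note, however, that the paper you are being compared against gives \emph{no} proof of this theorem: it is quoted from \cite{BGKZ1} (geometric proof) and \cite{BGKZ2} (algebraic proof), so the real comparison is with those references, whose algebraic framework your reduction does match. Within your own construction there are two problems. First, even in the case you call painless ($g\geq 2$), the spreader criterion is false as stated: in your own example $d=4$, $c=(1\,2\,3\,4)$, $\alpha_1=(1\,3)(2\,4)$, the cycle $c$ does carry the support of $\alpha_1$ across all of $\Omega$, yet $c^k\alpha_1c^{-k}=\alpha_1$ for all $k$ and $K_0:=\langle c^k\alpha_1c^{-k}\rangle$ is intransitive. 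The fix uses the freedom in choosing $c$: since $c$ normalizes $K_0$, the $K_0$-orbits are blocks for the transitive group $\langle K_0,c\rangle$, and every block system invariant under a $d$-cycle consists of congruence classes modulo some divisor $e$ of $d$; so if you pick $a$ with $b:=a^{\alpha_1}\neq a$ and choose the $d$-cycle $c$ with $a^{c}=b$, then $e\mid 1$ and $K_0$, hence $K$, is transitive. (Your $P_g$ scheme has a further untreated point: you need $(\alpha_1\cdots\alpha_t)^{-1}c^{-2}$ to be a product of $g-1$ squares, and for small $g$ this is not automatic --- not every even permutation is a square, e.g. $(1\,2)(3\,4\,5\,6)$.)

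Second, and decisively: the cases that carry the entire weight of the theorem, $N=T_1$ and $N=P_2$ --- one handle or two cross-caps, in particular with a single branch point --- are not proved in your proposal. You say yourself that reconciling the commutator (resp.\ square) equation with transitivity of $K$ there is ``the crux'' and you leave it open; but that crux \emph{is} the theorem, since for $\chi(N)\leq 0$ the hard content is exactly the tight low-genus situation. It is precisely what \cite{BGKZ2} establishes (its Theorems 2.2 and 2.3, which the present paper re-uses in Section 4, case (1), and then strengthens from transitivity of the normal closure to primitivity of the whole monodromy group). So what you have is a correct reduction plus a repairable treatment of the easy high-genus cases, but not a proof: the sparse low-genus data require a genuinely different, case-by-case construction of permutations satisfying the surface relation, of the kind carried out in the cited references and, in strengthened form, in Section 4 of this paper, and no perturbation of the spreader idea supplies it.
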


Given a covering, it is
\emph{decomposable} if it can be written as a composition of two non-trivial
coverings (i.e., both with degree bigger than 1), otherwise it is called
\emph{indecomposable}. In a decomposition of a branched
covering which is not a covering at least one of its components is a branched covering which is not a covering. Moreover,
since the degree of a decomposable covering is the product of the degrees of
its components (see \cite{BBZ}, theorem 2.3), we are interested in branched
coverings with a non-prime degree.

In order to simplify  notation, given $(M,\phi,N,B_{\phi},d)$, we make the
 identifications:  $\rho:=\rho_{\phi}$, $G:=\Im(\rho)$, and, for a fixed $z \in \widehat{N}$,
recall that $G$ is a permutation group on $\phi^{-1}(z)=\{z_{1},
\dots,z_{d}\}$, and $G_{z_{i}}$ is the isotropy subgroup of $z_{i}$.
\begin{lema}\label{isotropy}
Let $(M,\phi,N,B_{\phi},d)$ be a branched covering. Then in the sequence
$\xymatrix{
\pi_{1}(\widehat{M},z_{i}) \ar[r]^{\hat{\phi}_{\#}}& \pi_{1}(\widehat{N},z)
\ar[r]^{\rho}& G}$
we have $\rho^{-1}(G_{z_{i}})=\hat\phi_{\#}(\pi_{1}(\widehat{M},z_{i}))$.
\end{lema}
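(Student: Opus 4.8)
The plan is to prove the set equality by the double-inclusion route, translating each side into a statement about lifting loops through the associated unbranched covering $\hat\phi\colon \widehat{M} \lra \widehat{N}$. The only two ingredients I expect to need are the geometric definition of the Hurwitz representation $\rho$ recalled in the excerpt and the classical path–lifting criterion for ordinary coverings; since $\hat\phi$ is by hypothesis an ordinary unbranched covering of degree $d$, both are available.

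First I would make explicit what it means for $\rho(\alpha)$ to lie in $G_{z_i}$. By the very definition of $\rho = \rho_\phi$, for a class $\alpha \in \pi_1(\widehat{N},z)$ the permutation $\rho(\alpha)$ sends each $z_j \in \phi^{-1}(z)$ to the terminal point of the unique lift of a representative loop starting at $z_j$. Hence $\rho(\alpha) \in G_{z_i}$, i.e. $z_i^{\rho(\alpha)} = z_i$, holds if and only if the lift of $\alpha$ with initial point $z_i$ is again a loop based at $z_i$. This reduces the membership condition $\rho(\alpha) \in G_{z_i}$ to a purely lifting-theoretic statement.

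Next I would invoke the lifting criterion: for the covering $\hat\phi$, a loop $\alpha$ based at $z$ has a lift at $z_i$ that closes up to a loop based at $z_i$ precisely when $[\alpha] \in \hat\phi_{\#}(\pi_1(\widehat{M},z_i))$. Combining the two equivalences yields $\rho(\alpha) \in G_{z_i} \iff \alpha \in \hat\phi_{\#}(\pi_1(\widehat{M},z_i))$, which is exactly $\rho^{-1}(G_{z_i}) = \hat\phi_{\#}(\pi_1(\widehat{M},z_i))$. The inclusion $\hat\phi_{\#}(\pi_1(\widehat{M},z_i)) \subseteq \rho^{-1}(G_{z_i})$ is the transparent direction — the image of an actual loop upstairs obviously lifts back to a loop — while the reverse inclusion is where the lifting criterion does the real work.

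I do not anticipate a serious obstacle here; the statement is essentially the monodromy-group reformulation of the standard correspondence between subgroups of $\pi_1(\widehat{N},z)$ and the stabilizers of the covering action. The one point demanding care is bookkeeping about conventions: the right-action notation $z_i^{\rho(\alpha)}$ and the direction in which loops are lifted and composed must be kept consistent with the definition of $\rho$ fixed in Section~\ref{pbc}, so that the stabilizer of $z_i$ matches $\hat\phi_{\#}(\pi_1(\widehat{M},z_i))$ rather than a conjugate of it. Note also that injectivity of $\hat\phi_{\#}$ is not needed for this equality of images.
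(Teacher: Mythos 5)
Your proposal is correct and follows essentially the same route as the paper: both prove the equality by double inclusion, using the definition of $\rho$ to translate $\rho(\alpha)\in G_{z_i}$ into the statement that the lift of $\alpha$ at $z_i$ closes up, and unique path lifting to identify such classes with $\hat\phi_{\#}(\pi_{1}(\widehat{M},z_{i}))$. The only cosmetic difference is that you cite the classical loop-lifting criterion as a black box, whereas the paper spells out the two-line argument directly.
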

\begin{proof}
Consider $\alpha \in
\rho^{-1}(G_{z_{i}})$, then $z_{i}^{\rho(\alpha)}=z_{i}$ and the
lifting $\tilde{\alpha}$ of $\alpha$ with initial point $z_{i}$ is a class
in 
$\pi_{1}(\widehat{M},z_{i})$, and $\hat{\phi}_{\#}(\tilde{\alpha})=
\alpha$. Conversely, if $\alpha \in
\hat{\phi}_{\#}(\pi_{1}(\widehat{M},z_{i}))$, its 
lifting with initial point $z_{i}$ is an element of
$\pi_{1}(\widehat{M},z_{i})$, then $\rho(\alpha) \in G_{z_{i}}$. 
\end{proof}

Now we establish a version of Ritt's Theorem (see \cite{R}) for  branched coverings.
\begin{prop}\label{yo}
A  branched covering
 is decomposable if and only if its monodromy
group is imprimitive.
\end{prop}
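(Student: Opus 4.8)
The plan is to pass, via the monodromy representation $\rho\colon \pi_1(\widehat N,z)\lra G$, from the geometric notion of decomposability to the group-theoretic notion of primitivity, and then to invoke Proposition~\ref{dixon}. Fix $z_i\in\phi^{-1}(z)$ and write $H_M:=\hat\phi_\#(\pi_1(\widehat M,z_i))$. By Lemma~\ref{isotropy} we have $H_M=\rho^{-1}(G_{z_i})$, and since $\rho$ is surjective with $H_M\supseteq\Ker\rho$, the correspondence theorem gives an inclusion-preserving bijection between subgroups of $G$ containing $G_{z_i}$ and subgroups of $\pi_1(\widehat N,z)$ containing $H_M$. In particular $G_{z_i}$ is maximal in $G$ exactly when $H_M$ is maximal in $\pi_1(\widehat N,z)$ (the transitivity of $G$ makes all isotropy subgroups conjugate, so the choice of $z_i$ is immaterial). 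As $G$ is transitive, Proposition~\ref{dixon} then shows that $G$ is imprimitive precisely when $H_M$ is not maximal, i.e. when there exists $K$ with $H_M\subsetneq K\subsetneq\pi_1(\widehat N,z)$. The whole proof thus reduces to matching intermediate subgroups $K$ with intermediate branched coverings.

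For the forward implication, suppose $\phi=\psi\circ\eta$ with $\eta\colon M\lra L$ and $\psi\colon L\lra N$ non-trivial branched coverings, of degrees $e,f>1$ with $ef=d$. Removing from $N$ the finite set $F$ consisting of the branch images of $\psi$ and $\eta$ (which contains $B_\phi$), together with all its preimages, the restrictions $\hat\eta,\hat\psi$ become unbranched coverings with $\hat\phi=\hat\psi\circ\hat\eta$. On fundamental groups this gives $H_M\subseteq\hat\psi_\#(\pi_1(\widehat L))\subseteq\pi_1(\widehat N,z)$, and both inclusions are proper because the two indices equal $e>1$ and $f>1$. Hence $H_M$ is not maximal and $G$ is imprimitive.

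Conversely, assume $G$ is imprimitive, so by the reduction above there is $K$ with $H_M\subsetneq K\subsetneq\pi_1(\widehat N,z)$. By the classification of coverings, $K$ is realized by a connected covering $\hat\psi\colon\widehat L\lra\widehat N$ through which $\hat\phi$ factors as $\hat\phi=\hat\psi\circ\hat\eta$, with $\hat\psi$ and $\hat\eta$ of degrees $[\pi_1\widehat N:K]>1$ and $[K:H_M]>1$. It then remains to upgrade this factorization of unbranched coverings of punctured surfaces to a factorization of branched coverings of the closed surfaces. I would complete $\widehat L$ by filling in its ends: over each punctured disk $U_x\setminus\{x\}$ around a point $x\in F$, the finite covering $\hat\psi$ restricts to a disjoint union of punctured disks, and adjoining one centre per component extends $\hat\psi$ to a branched covering $\psi\colon L\lra N$. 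Applying the same filling to $\hat\eta$ yields $\eta$, whose completed total space must coincide with $M$ by uniqueness of the completion, so $\phi=\psi\circ\eta$ is the sought non-trivial decomposition.

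The main obstacle is precisely this final extension step: one must verify that filling punctures is compatible with the factorization, so that the two completions fit into genuine branched coverings $\psi$ and $\eta$ of closed surfaces (rather than merely open maps with non-discrete branch loci) and that the completed upstairs surface is exactly $M$. The key local input making this work is that every finite covering of a punctured disk is a disjoint union of punctured disks, which renders the completion canonical and functorial under composition; the remaining care lies in checking discreteness of the new branch sets and the identification of the completed cover with the original $M$.
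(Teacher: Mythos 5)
Your proof is correct and takes essentially the same route as the paper's: Lemma \ref{isotropy} together with the maximality criterion of Proposition \ref{dixon} to translate decomposability into non-maximality of the isotropy subgroup, and, for the converse, covering-space theory plus filling in punctures (the paper's compactification over small circles about the points of $B_{\phi}$). The only cosmetic difference is that you transfer maximality via the correspondence theorem, whereas the paper pushes the intermediate subgroup forward under $\rho$ directly, using $\Ker\rho<\rho^{-1}(G_{z_1})$.
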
 
\begin{proof}
Let $(M,\phi,N,B_{\phi},d)$ be decomposable. Then there
is a surface $K$ and branched coverings $\psi,\eta$ of degrees $w,u$ respectively
such 
that $\phi=\eta \psi$ and $d=uw$ ($u,w>1$). 
Define $\widehat{K}:=K-\eta^{-1}(B_{\phi})$ and let $\hat{\phi}$,
$\hat{\psi}$, $\hat{\eta}$ be the restrictions of $\phi$ and $\psi$ on
$\widehat{M}$ and $\eta$ 
on $\widehat{K}$. For a fixed $z \in  \widehat{N}$, let us consider $z_{1} \in
\hat{\phi}^{-1}(z)$ and $y_{1}:=\hat{\psi}(z_{1})$. We have the
commutative diagrams:  
\begin{footnotesize}
$$$$
\xymatrix{
M \ar[d]_{\phi} \ar[dr]^{\psi}& \\
N  & K \ar[l]_{\eta} } 
\qquad  \qquad
\xymatrix{
(\widehat{M},z_{1}) \ar[d]_{\hat{\phi}} \ar[dr]^{\hat{\psi}}& \\
(\widehat{N},z)  & (\widehat{K},y_{1}) \ar[l]_{\hat{\eta}}} 
\qquad \qquad
\xymatrix{
\pi_{1}(\widehat{M},z_{1}) \ar[d]_{\hat{\phi}_{\#}} \ar[dr]^{\hat{\psi}_{\#}}& \\
\pi_{1}(\widehat{N},z)  & \pi_{1}(\widehat{K},y_{1})
\ar[l]_{\hat{\eta}_{\#}}}
$$$$
\end{footnotesize}
that are equivalents to the following sequence of proper subgroups:
\begin{eqnarray}\label{comm}
\hat{\phi}_{\#}(\pi_{1}(\widehat{M},z_{1})) \lneqq
\hat{\eta}_{\#}(\pi_{1}(\widehat{K},y_{1})) \lneqq \pi_{1}(\widehat{N},z)
\end{eqnarray} 
with index
  $|\hat{\eta}_{\#}(\pi_{1}(\widehat{K},y_{1})):\hat{\phi}_{\#}(\pi_{1}(\widehat{M},z_{1}))|=w$ 
  and 
  $|\pi_{1}(\widehat{N},z):\hat{\eta}_{\#}(\pi_{1}(\widehat{K},y_{1}))|=u$. 
By applying $\rho$ on (\ref{comm}) and by  using Lemma
  \ref{isotropy}, we obtain proper subgroups  
$G_{z_{1}} \lneqq \rho(\hat{\eta}_{\#}(\pi_{1}(\widehat{K},y_{1}))) \lneqq G$ with the same index as 
above, since $ ker \rho < \rho^{-1}(G_{z_1})=\widehat{\phi}_{\#}(\pi_1(\widehat{M},z_1))$. Then
  $G_{z_1}$ is not a maximal subgroup and, by
 Proposition \ref{dixon}, $G$ is imprimitive.

Conversely, if $\rho:\pi_{1}(\widehat{N},z) \lra
G$ is Hurwitz's representation of  $(M,\phi,N,B_{\phi},d)$ and $G$  
is imprimitive, by Proposition 1.2 there exists 
a proper subgroup $H_{1}<G$ such that 
 $\rho^{-1}(G_{z_{1}}) \lneqq \rho^{-1}(H_{1}) \lneqq \pi_{1}(\widehat{N},z)$.
 Let $u>1$ be the
index of $\rho^{-1}(H_{1})$ in $\pi_{1}(\widehat{N},z)$ and let
$(\widehat{K},\hat{\eta}, \widehat{N},u)$ be the unbranched covering
determined by 
$\rho^{-1}(H_{1})$. Since
$\rho^{-1}(G_{z_{1}})=\hat \phi_{\#}(\pi_{1}(\widehat{M},z_{1}))$ (see Lemma 
\ref{isotropy}),  there is a lifting $\hat{\psi}:\widehat{M}\lra
\widehat{K}$ of $\hat{\phi}$ and we have
a commutative diagram of unbranched coverings.
 We want to
extend it to $M, N$ and a compactification $K$ of $\widehat{K}$. This
extension is
possible by applying cover space theory to the diagram's restriction on
small 
circles about the elements of $B_{\phi}$.
\end{proof}
A map is \emph{orientation-true} if it maps orientation
preserving loops to orientation preserving loops and orientation reversing
loops to orientation reversing loops. 
A  branched covering  is an orientation-true map (see \cite{GKZ}).

\bigskip
\noindent {\bf Remark.} The torsion part of the abelianized $\pi_1(P_g)$ with $g\geq 1$ is a cyclic subgroup of order $2$. In the presentation
 $\pi_1(P_g)=\langle a_1,...,a_g|a_1^2....a_g^2=1\rangle$ this subgroup of $\pi_1(P_g)_{ab}$ is generated by the unique element $a_1+...+a_g$ of order $2$.

\begin{prop}\label{indice}
Let $(K,\eta,N,B_{\eta},u)$ be a primitive branched covering. Then for all $w \in \mathbb{N}$ there exists a
subgroup 
$H$ in $\pi_{1}(K)$ of index $w$ such that $\eta_{\#}|_{H}$ is an epimorphism. 
\end{prop}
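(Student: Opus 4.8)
The plan is to reduce the statement to the existence of a single cyclic quotient that is compatible with $\eta_\#$. Write $L:=\ker(\eta_\#)\trianglelefteq\pi_1(K)$; since $\eta$ is primitive, $\eta_\#$ induces an isomorphism $\pi_1(K)/L\cong\pi_1(N)$. I claim it suffices to produce a surjection $\chi:\pi_1(K)\twoheadrightarrow\mathbb{Z}$ whose restriction $\chi|_L$ is still onto $\mathbb{Z}$. Indeed, granting such a $\chi$, set $H:=\ker\big(\pi_1(K)\xrightarrow{\chi}\mathbb{Z}\to\mathbb{Z}/w\big)$; this is a (normal) subgroup of index $w$, and $\chi(L)=\mathbb{Z}$ forces $L$ to map onto $\pi_1(K)/H\cong\mathbb{Z}/w$, so $LH=\pi_1(K)$. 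Hence $H$ maps onto $\pi_1(K)/L$, i.e. $\eta_\#(H)=\pi_1(N)$, which is exactly what is required.

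Passing to abelianizations, a homomorphism $\pi_1(K)\to\mathbb{Z}$ factors through $H_1(K)$ and kills its torsion; thus the desired $\chi$ exists precisely when $\bar L:=\ker\big(\eta_*:H_1(K)\to H_1(N)\big)$ contains an element whose image is primitive in the free part $F:=H_1(K)/\mathrm{tors}$ (evaluate a coordinate functional on it). So the whole problem becomes a linear-algebra claim about $\bar L$: it must contain a vector that is primitive modulo torsion. First I would pin down $\mathrm{rank}(\bar L)=b_1(K)-b_1(N)$. Using Riemann--Hurwitz (\ref{rhf}), $\chi(K)=u\chi(N)-\nu(\mathscr{D})$, together with $b_1=2-\chi$ (resp. $1-\chi$) in the orientable (resp. non-orientable) case, one computes $\mathrm{rank}(\bar L)=(\varepsilon_K-\varepsilon_N)+\nu(\mathscr{D})-(u-1)\chi(N)$, where $\varepsilon_X\in\{1,2\}$ records orientability. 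Because $B_\eta\neq\emptyset$ we have $\nu(\mathscr{D})\geq 1$, hence $\nu(\mathscr{D})\geq 2$ by Hurwitz's condition (\ref{hc2}), while $(u-1)\chi(N)\leq 0$; this yields $\mathrm{rank}(\bar L)\geq 1$ always, and $\geq 2$ whenever $N$ is non-orientable.

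The main obstacle is the torsion of $H_1(N)$, and it is precisely here that the Remark above is used. Splitting the surjection $H_1(K)\to H_1(N)\to H_1(N)/\mathrm{tors}\cong\mathbb{Z}^{b_1(N)}$ (the target is free, so it splits) exhibits $D:=\ker\big(H_1(K)\to\mathbb{Z}^{b_1(N)}\big)$ as a direct summand of $H_1(K)$ containing all torsion, with $\bar L\leq D$ of index dividing $|\mathrm{tors}\,H_1(N)|\leq 2$. If $N$ is orientable then $\mathrm{tors}\,H_1(N)=0$, so $\bar L=D$ is itself a direct summand of rank $\geq 1$ and trivially contains a primitive vector. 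If $N$ is non-orientable, the torsion is exactly $\mathbb{Z}/2$, so the image of $\bar L$ in $F$ is the kernel of a surjection $\mathbb{Z}^r\to\mathbb{Z}/2$ with $r=\mathrm{rank}(\bar L)\geq 2$; an elementary check shows such a kernel still contains a primitive vector of $\mathbb{Z}^r$ (this can fail for $r=1$, which is exactly the case ruled out by the rank bound). In either case $\bar L$ contains a vector primitive modulo torsion, the functional $\chi$ is constructed, and the proof closes. I expect the only genuinely delicate point to be this interplay between the index-$2$ torsion obstruction and the Riemann--Hurwitz lower bound on the rank.
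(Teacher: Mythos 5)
Your proof is correct, and it reaches the paper's construction by a noticeably different route. Both arguments reduce the problem to abelianizations, and both ultimately produce a subgroup of the same shape: your $H=\chi^{-1}(w\mathbb{Z})$ corresponds in $\pi_{1}(K)_{ab}$ to $\langle wx,y_{2},\dots,y_{b},\mathrm{torsion}\rangle$ for a basis adapted to a primitive vector $x\in\ker(\eta_{*})$, which is exactly the paper's $\bar{H}=\langle wa_{1},a_{2},\dots\rangle$. The difference is how the primitive kernel direction is found. The paper first pins down $K$ topologically: for $N=T_{g}$, Riemann--Hurwitz gives $K=T_{h}$ and $\Ker(\bar{\eta}_{\#})$ is a direct summand with an explicit adapted basis; for $N=P_{g}$, primitivity plus orientation-trueness force $K=P_{h}$, the $\mathbb{Z}_{2}$-torsion parts are matched, and explicit generators with $\Ker(\bar{\eta}_{\#})=\langle a_{1},\dots,a_{h-g}\rangle$ are exhibited. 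You bypass the classification of $K$ entirely: the Riemann--Hurwitz rank bound $\mathrm{rank}(\ker\eta_{*})=b_{1}(K)-b_{1}(N)\geq 2$ (when $N$ is non-orientable), together with the elementary fact that a subgroup of index at most $2$ in $\mathbb{Z}^{r}$, $r\geq 2$, contains a primitive vector, does the work of the paper's adapted bases. This buys robustness --- you never need $K=P_{h}$ nor orientation-trueness, your $H$ is even normal, and you isolate precisely why $B_{\eta}\neq\emptyset$ matters (it excludes the genuine failure case $r=1$) --- at the cost of being less explicit; your handling of the $\mathbb{Z}_{2}$-torsion via the split summand $D$ is, if anything, more careful than the paper's terse choice of a ``base'' of $\pi_{1}(P_{h})_{ab}$.

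One step you should write out: the assertion that the desired $\chi$ exists ``precisely when'' $\bar{L}=\ker(\eta_{*})$ contains a primitive vector tacitly identifies the image of $L=\ker(\eta_{\#})$ in $H_{1}(K)$ with all of $\bar{L}$, not merely a subgroup of it; you need an element of $L$ itself, not just of $\bar{L}$, on which $\chi$ takes the value $1$. Equality does hold, but only because $\eta_{\#}$ is surjective: then $\eta_{\#}$ maps $[\pi_{1}(K),\pi_{1}(K)]$ onto $[\pi_{1}(N),\pi_{1}(N)]$, so $H_{1}(K)/\mathrm{im}(L)\cong\pi_{1}(N)_{ab}$ and hence $\mathrm{im}(L)=\ker(\eta_{*})$. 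This is exactly where primitivity enters your argument --- it plays the role that the Correspondence theorem plays in the paper's reduction --- so it deserves an explicit line rather than being absorbed into ``precisely when''.
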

\begin{proof}
By the Correspondence theorem (see \cite{J}, chapter 1, theorem 1.8)  and since $\eta_{\#}$ is an epimorphism, it is enough to work with  the abelianized groups.
If $N=T_{g}$ then by (\ref{rhf}), $K=T_{h}$, with $h>g>0$. Let
$\bar{\eta}_{\#}:\pi_{1}(T_{h})_{ab} \lra \pi_{1}(T_{g})_{ab}$ 
be the induced by $\eta_{\#}$ epimorhism of the 
abelianized fundamental groups.  Note that
$\pi_{1}(T_{h})_{ab} \cong \langle
a_{1},\dots,a_{2(h-g)},b_{1},\dots,b_{2g}\rangle$, 
  where the $a_{i}$'s are generators of $\Ker(\bar{\eta_{\#}})$ 
and the  $\bar \eta_{\#}(b_{j})$'s are  generators
 of $\pi_{1}(T_{g})_{ab}$. 
  Define the subgroup $\bar{H}=\langle
  wa_{1},a_2,\dots,a_{2(h-g)},b_{1},\dots,b_{2g}\rangle$ of
  index  $w$. The restriction $\bar{\eta_{\#}}|_{\bar{H}}$ 
  is an  epimorphism.

If $N=P_{g}$ with $g \geq 2$, since $\eta$ is primitive
and orientation-true,
necessarily $K=P_{h}$ with $h\geq
2g$ by (\ref{rhf}). Since $\pi_1(K)_{ab}$  and $\pi_1(N)_{ab}$ have isomorphic torsion parts  (both $\mathbb{Z}_2$, see Remark above),
the restriction  of the epimorphism $\bar \eta_{\#}$ to the torsion   parts is an  isomorphism. By quoting out $\pi_1(K)_{ab}$ and $\pi_1(N)_{ab}$  by their
 torsion parts, one obtains an epimorphism between free Abelian groups of ranks $h-1$ and $g-1$, hence $\Ker (\bar\eta_{\#})$ is a free Abelian group of rank $h-g$.  Thus,  
 we can choose a base $\{
a_{1},\dots,a_{h}\}$ of $\pi_{1}(P_{h})_{ab}$ 
such that $\Ker (\bar{\eta}_{\#})=\langle  a_{1},\dots,a_{h-g} \rangle$, a free Abelian 
group of rank $h-g$.
In particular, $a_1$  has infinite order. The restriction of $\bar \eta_{\#}$ to
the subgroup  $\bar{H}=\langle
wa_{1},a_{2},\dots,a_{h}\rangle$, of index $w$, is an
epimorphism. 
\end{proof}

 \section{Characterization of a decomposable data}\label{decomp}
For the notion that we are going to study now, we will include the  trivial partitions
  (all components equal  1) in  admissible data. Notice that they do not modify the total defect.
   Let $u,w,s \in \mathbb{N}$, $U=[u_{1},\dots,u_{s}]$ be a partition of
$u$ and $\mathscr{W}=\{W_{1},\dots,W_{s}\}$ a collection of partitions of $w$.
We define a \emph{product
 partition $U.\mathscr{W}$} as the partition of 
 $uw$ obtained by multiplying each component of $W_{i}$ by 
$u_{i}$ and  taking the union over all  $i=1,\dots,s$. 

  \begin{exe}
  For $u=w=3$, the partition
 $[2,2,2,1,1,1]$ is expressed as  a product partition of $9$ in the following ways: we can express it either as $\bigl[\mathit{1}[2,1],\mathit{1}[2,1],\mathit{1}[2,1]\bigr]$ where $U=[\mathit{1,1,1}]$ and $\mathscr{W}=\{[2,1],[2,1],[2,1]\}$ or $\bigl[\mathit{2}[1,1,1],\mathit{1}[1,1,1]\bigr]$ where $U=[\mathit{2,1}]$ and $\mathscr{W}=\{[1,1,1],[1,1,1]\}$.
  \end{exe}

\begin{defi}  
Let $\mathscr{U}=\{U_{1},\dots,U_{t}\}$ be a family of partitions  of $u$  where the partition  $U_{i}$ contains $s_i$ elements,
 $\mathscr{W}=\bigcup_{i=1}^{t}\mathscr{W}_{i}$ a union of  collections of partitions of $w$ where each $\mathscr{W}_{i}$ is a collection of $s_i$ partitions. Then we define  
$\mathscr{U}.\mathscr{W}$ to be the collection of $t$ partitions of $uw$ where the $i$-$th$ partition is given by the product $U_{i}.\mathscr{W}_{i}, 1\leq i\leq t$.
  Given a  collection $\mathscr{D}$ of partitions   of $d$  and a  non-trivial factorization of $d$, say $d=uw$,
 if there exist $t \in \mathbb{N}$, $\mathscr{U}=\{U_{1},\dots,U_{t}\}$ 
and $\mathscr{W}=\bigcup_{i=1}^{t}\mathscr{W}_{i}$  such that  $\mathscr{D} =\mathscr{U}.\mathscr{W}=\{U_{i}.\mathscr{W}_{i}\}_{i=1}^{t}$ then  
we say that $\mathscr{U}=\{U_{1},\dots,U_{t}\}$ 
and $\mathscr{W}=\bigcup_{i=1}^{t}\mathscr{W}_{i}$  define an algebraic  decomposition $($or factorization$)$ of $\mathscr{D}$. 
\end{defi}

 \begin{exe}
Let $d \in \mathbb{N}$ be a non-prime odd integer. Then every non-trivial
factorization  of $d$ as the product of two positive integers, 
$d=uw$, defines 
a factorization of the admissible data $\mathscr{D}=\{[d]\}$, with admissible non-trivial factors
$\mathscr{U}=\{[u]\}$ and $\mathscr{W}=\{[w]\}$.
\end{exe} 
\begin{prop}\label{fator}
Let $d=uw$ and $\mathscr{D}, \mathscr{U}$, $\mathscr{W}$ 
be   collections of partitions of  $d,u$ and $w$ respectively such that 
$\mathscr{D}=\mathscr{U}.\mathscr{W}$. Then
$\nu(\mathscr{D})=\nu(\mathscr{W})+w \nu(\mathscr{U})$. 
\end{prop}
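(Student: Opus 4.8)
The plan is to reduce the statement, by additivity of the total defect over a collection, to a single--factor identity for one product partition, and then prove that identity by an elementary manipulation of each component. First recall that for any collection $\mathscr{C}$ of partitions one has $\nu(\mathscr{C})=\sum\nu(C)$ summed over the members $C$ of $\mathscr{C}$, and that $\nu([c_{1},\dots,c_{r}])=\sum_{j}(c_{j}-1)$. Since by definition $\mathscr{D}=\{U_{i}.\mathscr{W}_{i}\}_{i=1}^{t}$ and $\mathscr{W}=\bigcup_{i=1}^{t}\mathscr{W}_{i}$, we get $\nu(\mathscr{D})=\sum_{i=1}^{t}\nu(U_{i}.\mathscr{W}_{i})$, $\nu(\mathscr{W})=\sum_{i=1}^{t}\nu(\mathscr{W}_{i})$ and $\nu(\mathscr{U})=\sum_{i=1}^{t}\nu(U_{i})$. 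Hence it suffices to establish, for each fixed $i$, the single--factor identity $\nu(U_{i}.\mathscr{W}_{i})=\nu(\mathscr{W}_{i})+w\,\nu(U_{i})$; summing over $i$ then yields the proposition.

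So I would fix $i$ and write $U=U_{i}=[u_{1},\dots,u_{s}]$, a partition of $u$, together with $\mathscr{W}_{i}=\{W_{1},\dots,W_{s}\}$, where each $W_{k}=[w_{k,1},\dots,w_{k,m_{k}}]$ is a partition of $w$, so $\sum_{\ell}w_{k,\ell}=w$ for every $k$. By the definition of the product partition, $U.\mathscr{W}_{i}$ has components $u_{k}w_{k,\ell}$ as $k$ runs over $1,\dots,s$ and $\ell$ over $1,\dots,m_{k}$, whence $\nu(U.\mathscr{W}_{i})=\sum_{k,\ell}(u_{k}w_{k,\ell}-1)$.

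The computational heart is the elementary identity $u_{k}w_{k,\ell}-1=w_{k,\ell}(u_{k}-1)+(w_{k,\ell}-1)$, valid for each component. Substituting it and splitting the double sum, the terms $(w_{k,\ell}-1)$ contribute $\sum_{k,\ell}(w_{k,\ell}-1)=\sum_{k}\nu(W_{k})=\nu(\mathscr{W}_{i})$, while the terms $w_{k,\ell}(u_{k}-1)$ contribute $\sum_{k}(u_{k}-1)\sum_{\ell}w_{k,\ell}=\sum_{k}(u_{k}-1)\,w=w\,\nu(U)$, using that each $W_{k}$ is a partition of $w$. Adding the two gives the single--factor identity, and the reduction above finishes the proof.

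There is no genuine obstacle here: once the identity $u_{k}w_{k,\ell}-1=w_{k,\ell}(u_{k}-1)+(w_{k,\ell}-1)$ is isolated, the rest is bookkeeping with finite sums. The only point needing a little care is that the index ranges match the definition of $\mathscr{U}.\mathscr{W}$, namely that the number of parts $s_{i}$ of $U_{i}$ equals the number of members of $\mathscr{W}_{i}$, which is exactly the compatibility built into that definition. An equivalent and even shorter route is to use $\nu(C)=(\text{sum of } C)-(\text{number of parts of } C)$: since $U.\mathscr{W}_{i}$ is a partition of $uw$ with $\sum_{k}m_{k}$ parts, $\nu(U.\mathscr{W}_{i})=uw-\sum_{k}m_{k}$, and comparing with $\nu(\mathscr{W}_{i})=sw-\sum_{k}m_{k}$ and $w\,\nu(U)=w(u-s)$ gives the equality directly.
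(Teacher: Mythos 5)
Your proof is correct and takes essentially the same approach as the paper: the paper also reduces to the per-index identity $\nu(U_{i}.\mathscr{W}_{i})=\nu(\mathscr{W}_{i})+w\,\nu(U_{i})$ and verifies it by counting parts, writing $\nu(U_{i}.\mathscr{W}_{i})=uw+\nu(\mathscr{W}_{i})-ws_{i}$, which is exactly the ``shorter route'' you sketch in your final paragraph. Your component-wise identity $u_{k}w_{k,\ell}-1=w_{k,\ell}(u_{k}-1)+(w_{k,\ell}-1)$ is just a rearrangement of that same bookkeeping.
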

\begin{proof}
If $\mathscr{D}=\mathscr{U}.\mathscr{W}$, there exist positive integers $t$
and $s_{i}$, for $i=1,\dots,t$, such that 
$\mathscr{U}=\{U_{1},\dots,U_{t}\}$ and
$\mathscr{W}=\bigcup_{i=1}^{t}\mathscr{W}_{i}$, where $U_{i}$ is a partition of $u$ with  $s_{i}$
components and $\mathscr{W}_{i}$ is a
collection of $s_{i}$ partitions of $w$, for $i=1,\dots,t$.
Then $\mathscr{D}=\{U_{i}.\mathscr{W}_{i}\}_{i=1}^{t}$ and
$\nu(\mathscr{D})=\sum_{i=1}^{t}\nu(U_{i}.\mathscr{W}_{i})=\sum_{i=1}^{t}(uw+\nu(\mathscr{W}_{i})-ws_{i})=\sum_{i=1}^{t}\nu(\mathscr{W}_{i})+\sum_{i=1}^{t}w(u-s_{i})=\nu(\mathscr{W})+w
\sum_{i=1}^{t}\nu(U_{i})=\nu(\mathscr{W})+w \nu(\mathscr{U})$.
\end{proof}
\begin{cor}
The factorization  of  admissible data  does not imply admissible factors. \qed
\end{cor}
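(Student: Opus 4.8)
The statement is negative, so the plan is simply to exhibit one admissible collection $\mathscr{D}$ together with a factorization $\mathscr{D} = \mathscr{U}.\mathscr{W}$ in which at least one of the factors fails to satisfy Hurwitz's condition (\ref{hc2}), i.e.\ has odd total defect. The formula $\nu(\mathscr{D}) = \nu(\mathscr{W}) + w\,\nu(\mathscr{U})$ from Proposition \ref{fator} already indicates where to look: when $w$ is even the term $w\,\nu(\mathscr{U})$ is even regardless of the parity of $\nu(\mathscr{U})$, so the admissibility of $\mathscr{D}$ (evenness of $\nu(\mathscr{D})$) forces only $\nu(\mathscr{W})$ to be even and leaves $\nu(\mathscr{U})$ free to be odd.

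Guided by this, I would take the smallest even value $w = 2$ together with $u = 2$, hence $d = 4$, and set $\mathscr{U} = \{[2]\}$ and $\mathscr{W} = \{[1,1]\}$ (so $t = 1$ and $s_{1} = 1$). Forming the product partition as in the Definition, one multiplies each component of $[1,1]$ by $u_{1} = 2$ to obtain $[2,2]$, so that $\mathscr{D} = \mathscr{U}.\mathscr{W} = \{[2,2]\}$ is a genuine factorization of a collection of partitions of $d = 4$.

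It then remains to check the two defect computations, both immediate. On one hand $\nu(\mathscr{D}) = \nu([2,2]) = (2-1)+(2-1) = 2$, which is even, so $\mathscr{D}$ is admissible (in agreement with Proposition \ref{fator}, since $\nu(\mathscr{W}) + w\,\nu(\mathscr{U}) = 0 + 2\cdot 1 = 2$). On the other hand $\nu(\mathscr{U}) = \nu([2]) = 2 - 1 = 1$ is odd, so the factor $\mathscr{U}$ violates (\ref{hc2}) and is not admissible. This single example establishes the claim.

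There is no genuine obstacle beyond selecting the example correctly; the only point demanding care is the bookkeeping in the Definition of $\mathscr{U}.\mathscr{W}$ --- ensuring that the number of partitions in each $\mathscr{W}_{i}$ matches the number $s_{i}$ of components of $U_{i}$ --- which holds automatically in the degenerate case $t = s_{1} = 1$ chosen above. (If one instead prefers an example with $w$ odd, the same idea works by arranging $\nu(\mathscr{U})$ and $\nu(\mathscr{W})$ both odd so that neither factor is admissible while their sum, and hence $\nu(\mathscr{D})$, is even.)
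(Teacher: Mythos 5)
Your proposal is correct and follows essentially the same route as the paper: the corollary is stated as an immediate consequence of Proposition \ref{fator}, whose formula $\nu(\mathscr{D})=\nu(\mathscr{W})+w\,\nu(\mathscr{U})$ shows that for even $w$ the admissibility of $\mathscr{D}$ places no parity constraint on $\nu(\mathscr{U})$. Your explicit example $\mathscr{D}=\{[2,2]\}=\{[2]\}.\{[1,1]\}$ with $\nu(\mathscr{U})=1$ odd is a valid concrete instantiation of exactly that observation.
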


Recall that a collection of partitions of $d$  is 
called {\it decomposable on $N$} if it is realized on $N$ by a decomposable primitive $d$-fold branched covering.

\begin{prop}\label{nt1}
Let $N$ be a connected closed surface with $\chi(N) \leq 0$.
Admissible data $\mathscr{D}$ are decomposable on $N$   if
 and only if there exists a factorization of
 $\mathscr{D}$ such that its first factor 
is  non-trivial admissible data.
\end{prop}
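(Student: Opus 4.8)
The plan is to prove both implications by matching the geometric decomposition $\phi=\eta\psi$ of a branched covering with the combinatorial factorization $\mathscr{D}=\mathscr{U}.\mathscr{W}$ of its data. The bridge is the elementary fact that local degrees multiply under composition: if $\eta$ has local degree $u_j$ at a point $k_j$ and $\psi$ has local degree $w'$ at a point over $k_j$, then $\phi=\eta\psi$ has local degree $u_j w'$ there. Throughout, $d=uw$ with $u,w>1$; $\mathscr{U}$ will play the role of the branch data of the intermediate covering, while $\mathscr{W}$ records the branching of the upper map fibrewise.

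For the forward implication, suppose $\mathscr{D}$ is realized by a decomposable primitive branched covering $\phi\colon M\lra N$. By definition $\phi=\eta\psi$ with non-trivial branched coverings $\eta\colon K\lra N$ and $\psi\colon M\lra K$ of degrees $u,w>1$ and $d=uw$. Let $\mathscr{U}=\{U_x\}_{x\in B_\phi}$ be the branch data of $\eta$, allowing trivial partitions. Over a point $x$ with $\eta^{-1}(x)=\{k_1,\dots,k_s\}$ and $\eta$-local degrees $u_1,\dots,u_s$, the $\psi$-local degrees over $k_j$ form a partition $W_{x,j}$ of $w$; since $\phi$-local degrees are the products, the preimages of $x$ contribute $u_j$ times each component of $W_{x,j}$, so $D_x=U_x.\mathscr{W}_x$ with $\mathscr{W}_x=\{W_{x,1},\dots,W_{x,s}\}$. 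Putting $\mathscr{W}=\bigcup_{x}\mathscr{W}_x$ gives $\mathscr{D}=\mathscr{U}.\mathscr{W}$. Now $\mathscr{U}$ is admissible, being the branch data of the branched covering $\eta$ and hence satisfying the Hurwitz condition (\ref{hc2}); and $\mathscr{U}$ is non-trivial because primitivity of $\phi$ forces $\eta_\#$ to be surjective (it is a factor of the surjection $\phi_\#=\eta_\#\psi_\#$), so the degree-$u>1$ map $\eta$ cannot be an unbranched covering, whence $B_\eta\neq\emptyset$. Thus the first factor $\mathscr{U}$ is non-trivial admissible data.

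For the converse, suppose $\mathscr{D}=\mathscr{U}.\mathscr{W}$ with $\mathscr{U}=\{U_1,\dots,U_t\}$ non-trivial admissible data (partitions of $u$) and $\mathscr{W}=\bigcup_i\mathscr{W}_i$ (partitions of $w$), where $U_i=[u_{i,1},\dots,u_{i,s_i}]$ and $\mathscr{W}_i=\{W_{i,1},\dots,W_{i,s_i}\}$. By Theorem \ref{D} realize $\mathscr{U}$ by a primitive branched covering $\eta\colon K\lra N$ of degree $u$, and write $\eta^{-1}(x_i)=\{k_{i,1},\dots,k_{i,s_i}\}$ with $\eta$-local degree $u_{i,j}$ at $k_{i,j}$. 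By (\ref{rhf}), $\chi(K)=u\chi(N)-\nu(\mathscr{U})\leq 0$, and by Proposition \ref{fator} together with the evenness of $\nu(\mathscr{D})$ and $\nu(\mathscr{U})$ we get $\nu(\mathscr{W})=\nu(\mathscr{D})-w\,\nu(\mathscr{U})\equiv 0\pmod 2$, so $\mathscr{W}$ is admissible. I then build $\psi\colon M\lra K$ of degree $w$ whose branch data over $k_{i,j}$ is $W_{i,j}$; multiplying local degrees as above, the composite $\phi=\eta\psi$ is branched only over the $x_i$, with $D_{x_i}=U_i.\mathscr{W}_i$, so $\phi$ realizes $\mathscr{D}$, and it is decomposable since $u,w>1$.

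The main obstacle is to arrange that the composite $\phi$ is \emph{primitive}. If $\mathscr{W}$ is non-trivial, I realize it on $K$ by a primitive branched covering $\psi$ via Theorem \ref{D}, taking the branch locus to be exactly the $k_{i,j}$ with $W_{i,j}$ non-trivial (the realization through a Hurwitz representation permits the branch set to be any prescribed finite set of the correct cardinality); then $\eta_\#$ and $\psi_\#$ are both surjective, so $\phi_\#=\eta_\#\psi_\#$ is surjective and $\phi$ is primitive. If instead $\mathscr{W}$ is trivial, then $\psi$ must be an unbranched $w$-fold covering of $K$, and an arbitrary choice need not make $\phi$ primitive; here I invoke Proposition \ref{indice}, which, because $\eta$ is primitive, supplies a subgroup $H<\pi_1(K)$ of index $w$ with $\eta_\#|_H$ an epimorphism. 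Taking $\psi\colon M\lra K$ to be the unbranched covering determined by $H$ yields $\phi_\#(\pi_1(M))=\eta_\#(H)=\pi_1(N)$, so $\phi$ is again primitive. In either case $\phi=\eta\psi$ is a decomposable primitive branched covering realizing $\mathscr{D}$ on $N$, which completes the converse.
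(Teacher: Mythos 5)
Your proof is correct and follows essentially the same route as the paper: the forward direction extracts $\mathscr{U}$ as the (trivially padded) branch data of the intermediate covering $\eta$, using multiplicativity of local degrees and the fact that primitivity of $\phi$ forces $\eta_{\#}$ surjective, hence $B_{\eta}\neq\emptyset$; the converse realizes $\mathscr{U}$ and then $\mathscr{W}$ by Theorem \ref{D} (with $\mathscr{W}$ admissible via Proposition \ref{fator}), composing primitive coverings, and invokes Proposition \ref{indice} exactly as the paper does when $\mathscr{W}$ is trivial. The only differences are expository --- you spell out details (the evenness computation for $\nu(\mathscr{W})$, the index argument for $B_{\eta}\neq\emptyset$, the use of $\eta_{\#}|_{H}$) that the paper leaves implicit.
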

\begin{proof}
Suppose that $(M,\phi,N,B_{\phi},d)$ is a decomposable primitive branched
covering realizing 
$\mathscr{D}$. Then there exist a surface $K$ and coverings 
 $\psi, \eta$ of degrees $w,u$ respectively such that $\phi=\eta \psi$. Hence 
 $d=uw$ and, since $\phi$ is primitive, there is a
 non-empty subset  $B_{\eta}\subset B_{\phi}$ such that 
$(K,\eta,N,B_{\eta},u)$ is a primitive branched covering with branch data
$\mathscr{\widetilde{U}}$. Note that each
 $x \in B_{\phi}$ determines a partition  of  $u$ (that will be trivial if
$x \in B_{\phi}-B_{\eta}$) and each point in  $\eta^{-1}(x)$ determines a
partition of $w$ (that will be trivial if such a point is not a branch point of
$\psi$). In other words,  $x \in B_{\phi}$ determines a partition 
$U_{x}$ of $u$ and a collection 
$\mathscr{W}_{x}$ of partitions of $w$, such that
$U_{x}.\mathscr{W}_{x}$ is the partition of  $d$ 
that $x$ determines for $\phi$. 
Then $\mathscr{D}=\{U_{x}.\mathscr{W}_{x}\}_{x \in B_{\phi}}$ is
a factorization with an admissible non-trivial first factor $\mathscr{U}=\{U_{x}\}_{x \in B_{\phi}}$, because $\nu(\mathscr{U})=\nu(\mathscr{\widetilde{U}})$
(the differences between $\mathscr{U}$ and $\widetilde{\mathscr{U}}$ are
just the trivial partitions) and
$B_{\eta} \neq \emptyset$. 

Conversely, suppose $d=uw$ and let $\mathscr{D}=\{U_{x}.\mathscr{W}_{x}\}_{x \in B}$ be a factorization of  admissible data, whose first 
factor $\mathscr{U}$ is non-trivial and admissible,  where $B\subset N$ is a finite subset.
By Theorem \ref{D} there exists a primitive branched covering
 $(K,\eta,N,B_{\eta},u)$ realizing 
$\mathscr{U}$, in particular $B_{\eta} \subset B$  and $U_x$ is a trivial partition for  each $x\in B\backslash B_\eta$. By
 Proposition \ref{fator} the second factor $\mathscr{W}$, possibly trivial,
 is admissible. If it is 
 non-trivial, by Theorem \ref{D} there exists 
a primitive branched covering  
 $(M,\psi,K,B_{\psi},w)$ realizing it
  as branch data. Without loss of generality 
we can assume that  $B_{\psi}\subset \eta^{-1}(B)$ and, for each $x\in \eta(B_{\psi})$, $U_x=[u_{x,1},\dots,u_{x,s_x}], $
$\eta^{-1}(x)=\{y_{x,1}, \dots,y_{x,s_x}\}$, $\eta$ has local degree $u_{x,j}$ at $y_{x,j}$, where $\psi$ and the point $y_{x,j}$ determine     
   the partition  $\mathscr{W}_{x,j}$ of  $w$, $1\leq  j \leq s_x$.  Thus 
 $B_{\psi}\subset \eta^{-1}(A \cup B_{\eta})$. Thus 
$(M,\eta \psi, N, B_{\eta}  \cup \eta(B_{\psi}),$ $d=uw)$ is a decomposable primitive branched covering with branch data
 $\mathscr{D}$. 
If $\mathscr{W}$ is trivial the result follows from Proposition \ref{indice}.
\end{proof}
\begin{cor}\label{comp}
If $\mathscr{D}=\{D_{1},\dots,D_{t}\}$ is decomposable on $N$ then 
each component of $D_{i}$, for $i=1,\dots,t$, is a product 
of two integers (one of them or both can equal 1) less
than or equal to the degrees of the two coverings in the
decomposition, respectively.  \qed
\end{cor}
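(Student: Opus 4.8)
The plan is to read off the conclusion directly from the factorization supplied by Proposition \ref{nt1} (more precisely, by its proof). Since $\mathscr{D}$ is decomposable on $N$, there is a decomposable primitive branched covering $(M,\phi,N,B_\phi,d)$ realizing $\mathscr{D}$, with $\phi=\eta\psi$ where $\eta$ has degree $u$ and $\psi$ has degree $w$, so that $d=uw$. The forward direction of Proposition \ref{nt1} then exhibits a factorization $\mathscr{D}=\mathscr{U}.\mathscr{W}=\{U_i.\mathscr{W}_i\}_{i=1}^{t}$ in which each $U_i=[u_{i,1},\dots,u_{i,s_i}]$ is a partition of $u$ and each $\mathscr{W}_i=\{W_{i,1},\dots,W_{i,s_i}\}$ is a collection of $s_i$ partitions of $w$, so that $D_i=U_i.\mathscr{W}_i$ for $1\le i\le t$.

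Next I would simply unwind the definition of the product partition. By the definition of $U_i.\mathscr{W}_i$, its components are exactly the integers $u_{i,k}\cdot w'$, where $1\le k\le s_i$ and $w'$ runs over the components of $W_{i,k}$. Hence every component of $D_i$ is a product of an integer $u_{i,k}$---a component of the partition $U_i$ of $u$---with an integer $w'$---a component of the partition $W_{i,k}$ of $w$.

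Finally I would record the elementary size bounds: since $U_i$ is a partition of $u$, one has $u_{i,k}\le u$, and since $W_{i,k}$ is a partition of $w$, one has $w'\le w$, where $u$ and $w$ are precisely the degrees of the two coverings $\eta$ and $\psi$ in the decomposition. Either factor is allowed to equal $1$ (this occurs exactly when the relevant point fails to be a branch point of $\eta$, respectively of $\psi$), which accounts for the parenthetical clause. This completes the argument; there is no genuine obstacle here, since the entire content is already encoded in the factorization of Proposition \ref{nt1} together with the definition of the product partition.
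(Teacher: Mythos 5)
Your proof is correct and follows exactly the route the paper intends: the corollary carries a \qed with no separate argument precisely because it is read off from the forward direction of Proposition \ref{nt1} (the factorization $\mathscr{D}=\{U_{x}.\mathscr{W}_{x}\}_{x\in B_{\phi}}$ coming from the decomposition $\phi=\eta\psi$) together with the definition of the product partition, which is what you did. The size bounds $u_{i,k}\leq u$ and $w'\leq w$ and the remark about factors equal to $1$ at non-branch points are exactly the implicit content of the paper's statement.
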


\begin{exe}
A primitive branched covering like $(M,\phi,T_{1},\{x\},4)$ is
indecomposable: notice that possible  admissible data are
 $[1,3]$ and $[2,2]$. But $[1,3]$ is not realized 
by a decomposable primitive  branched covering on $T_{1}$ by
 Corollary \ref{comp}, and every factorization of
 $[2,2]$  has either a trivial or a non-admissible first factor on $T_{1}$. 
 \end{exe}
\section{ Realization of   branch  data by  indecomposable  branched coverings}
We begin this section by giving an example of  admissible data which  admits a decomposable and an
{\it in}decomposable realizations on  $T_{1}$, at 
the same time.
\begin{exe}
Let us consider the realizable data $\mathscr{D}=\{[3,2,2,2],[3,2,2,2]\}=\{[\mathit{1}[3],\mathit{2}[1,1,1]],[\mathit{1}[3],\mathit{2}[1,1,1]]\}$ on  
$T_{1}$,  with the non-trivial admissible first factor
$\mathscr{U}=\{\mathit{[1,2],[1,2]}\}$ and the 
second  factor
$\mathscr{W}=\cup_{i=1}^{2}\{[3],[1,1,1]\}_{i}$. By Proposition
\ref{nt1}, $\mathscr{D}$ is  decomposable on $T_{1}$ but, 
on the other hand, we can define the following representation:
\begin{small}
\begin{eqnarray*}
\rho:\pi_{1}(T_{1}-\{x,y\})= \langle
 a,b,\mathbf{u_{x}},\mathbf{u_{y}}| [a,b]\mathbf{u_{x}u_{y}}=1 \rangle & \lra & \Sigma_{9}, \\
 a & \longmapsto & (1\;4\;5\;6\;7\;8\;9\;3\;2), \\
 b & \longmapsto & (2\;4\;5\;6\;7\;8\;9\;3), \\
\mathbf{u_{x}} & \longmapsto & (1\;2\;3)(4\;5)(6\;7)(8\;9), \\
\mathbf{u_{y}} & \longmapsto & (1\;2\;3)(4\;5)(6\;7)(8\;9)
\end{eqnarray*}
\end{small}
where $G=\Im \rho$  is a transitive primitive permutation group because it
contains a $9$- and an $8$-cycles $($see Example \ref{e1}$)$. Then, by Proposition  \ref{yo},
the  branched covering that it determines is indecomposable $($and, hence, primitive$)$.
\end{exe}
The goal of this section is to show that every admissible data are
realizable by an indecomposable (and, hence, primitive) branched
covering on a connected
closed surface $N$, with $\chi(N)\leq 0$.
The following theorem (that will be proved in the next section) solves the
case when $N$ is either the torus $T_{1}$ 
or the Klein bottle $P_{2}$ with only one branch  point, and it will be
used  to solve the general case. 
\begin{thm}\label{nt2} 
Let $d \in \mathbb{N}$ be a  non-prime and $D=[d_{1},\dots,d_{t}]$ a non-trivial partition
of $d$ such that $\nu(D) \equiv 0 \pmod{2}$. Then there exist 
indecomposable  $($and, hence, primitive$)$ coverings on the torus $T_{1}$ and on the
Klein 
bottle  $P_{2}$ respectively, realizing  $D$ as branch data.
\end{thm}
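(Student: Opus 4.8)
The plan is to invoke Proposition \ref{yo}: since indecomposability is equivalent to primitivity of the monodromy group, it suffices, for $N=T_{1}$ and $N=P_{2}$ with a single branch point $x$, to exhibit a Hurwitz representation $\rho:\pi_{1}(N-\{x\},z)\lra \Sigma_{d}$ whose image $G$ is transitive and primitive and for which $\rho(\mathbf{u}_{x})$ has cyclic structure $D$. In both cases $\pi_{1}(N-\{x\},z)$ is free of rank $2$, generated by $a,b$ (resp. $a_{1},a_{2}$), and the boundary class is the fixed word $\mathbf{u}_{x}=[a,b]^{-1}=[b,a]$ for the torus and $\mathbf{u}_{x}=(a_{1}a_{2}a_{1}a_{2}^{-1})^{-1}$ for the Klein bottle. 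Hence the whole problem reduces to choosing images $\alpha,\beta\in\Sigma_{d}$ of the two generators so that the corresponding word $w(\alpha,\beta)$ (a commutator, resp. the Klein-bottle relator) has cyclic structure $D$ while $\langle\alpha,\beta\rangle$ is transitive and primitive. Note that $w(\alpha,\beta)$ is automatically even for any choice of $\alpha,\beta$, which matches the hypothesis $\nu(D)\equiv 0\pmod 2$; this parity is the only constraint imposed by $N$, so one expects every such $D$ to be attainable.

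For primitivity I would arrange the cheap sufficient condition of Example \ref{e1} (or, equivalently here, Example \ref{REF}): make $G$ transitive and force it to contain a $(d-1)$-cycle, noting that $\gcd(d-1,d)=1$ and $d-1$ exceeds every non-trivial divisor of $d$. Concretely, I would take $\alpha=\rho(a)$ to be the $(d-1)$-cycle $(1\,2\,\cdots\,d-1)(d)$, and then use the essentially free choice of $\beta=\rho(b)$ to (i) move the symbol $d$, which together with $\alpha$ gives transitivity, and (ii) tune the cyclic structure of $w(\alpha,\beta)$ to be exactly $D$. Since $\alpha$ is already a $(d-1)$-cycle, transitivity then yields primitivity, hence indecomposability, for free. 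The hypothesis that $d$ be non-prime only serves to make the statement non-vacuous: for prime $d$ every transitive group is automatically primitive and the degree admits no non-trivial factorization.

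The real work, and the main obstacle, is step (ii): producing $\beta$ so that the commutator $[\beta,\alpha]=\beta^{-1}\beta^{\alpha}$ (for $T_{1}$) has precisely the prescribed cyclic structure $D$, while keeping $\alpha$ a $(d-1)$-cycle and $d$ in the support of $\beta$. I expect this to be an explicit combinatorial construction broken into a few cases according to the shape of $D$ — for instance according to the number of non-trivial parts, the presence of fixed points, and the parities of the individual parts — followed by a direct verification, by tracking the action on $\Omega$, that $w(\alpha,\beta)$ splits into cycles of exactly the required lengths. Degenerate partitions, such as $D=[d]$ (possible only for $d$ odd) or partitions with very few or very many parts, would likely need separate treatment, and in each case one must confirm that the chosen $\beta$ genuinely moves $d$ so that transitivity is not lost.

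Finally, the Klein-bottle case $N=P_{2}$ calls for a parallel but independent construction, since there $\mathbf{u}_{x}$ is the relator $(a_{1}a_{2}a_{1}a_{2}^{-1})^{-1}$ rather than a commutator. I would run the same scheme — one generator a $(d-1)$-cycle to secure primitivity, the other tuned to realize $D$ — but verify the cyclic structure of $\rho(a_{1})\rho(a_{2})\rho(a_{1})\rho(a_{2})^{-1}$ by hand. In either case, once such a $\rho$ is produced, the branched covering it determines realizes $D$ with primitive monodromy and is therefore indecomposable, and hence primitive, by Proposition \ref{yo}.
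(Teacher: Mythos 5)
Your reduction is sound and coincides with the paper's starting point: by Proposition \ref{yo} it suffices to produce $\lambda,\beta\in\Sigma_{d}$ with $[\lambda,\beta]$ of cyclic structure $D$ and $\langle\lambda,\beta\rangle$ transitive and primitive (respectively, for $P_{2}$, permutations $\omega,\theta$ with $\omega\theta\omega\theta^{-1}$ or $\omega^{2}\theta^{2}$ of structure $D$), and your parity remark is correct. But your proof stops exactly where the theorem begins. Your step (ii) --- given the fixed $(d-1)$-cycle $\alpha=(1\,2\cdots d-1)(d)$, find $\beta$ moving $d$ such that $[\beta,\alpha]$ has cyclic structure exactly $D$ --- is the entire content of the statement, and you only announce that you ``expect'' an explicit case-by-case construction to work. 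Note what that step really amounts to: writing $[\beta,\alpha]=\beta^{-1}\alpha\beta\alpha^{-1}=\gamma$ forces $\beta^{-1}\alpha\beta=\gamma\alpha$, so you must exhibit some $\gamma\in D$ for which $\gamma\alpha$ is again a $(d-1)$-cycle; in other words you must prove that every non-trivial class of even defect is a product of two $(d-1)$-cycles. Moreover $\gamma$ must move the point $d$: if $\gamma$ fixes $d$, then $\gamma\alpha$ fixes $d$, every solution $\beta$ of $\beta^{-1}\alpha\beta=\gamma\alpha$ lies in $\langle\alpha\rangle\beta_{0}$ with $d^{\beta_{0}}=d$, hence $\langle\alpha,\beta\rangle$ fixes $d$ and transitivity fails. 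This existence statement is a genuine combinatorial theorem about products of cycle classes, not a routine verification, and nothing in your proposal establishes it; the Klein-bottle word requires a separate argument of the same kind, which you also defer. So there is a real gap.

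It is worth seeing that the paper distributes the difficulty in exactly the opposite way, which explains why its proof is long precisely where yours is silent. The paper takes $\alpha\in D$ itself as one generator and constructs an explicit companion $\beta$ (a $d$-cycle in cases (2) and (3); case (1) is handled by explicit formulas together with Example \ref{REF}) such that $\alpha\beta$ and $\beta$ are conjugate. Then a conjugating $\lambda$ with $\alpha\beta=\lambda\beta\lambda^{-1}$ exists for free, giving $\alpha=[\lambda,\beta]$, so the realization step costs nothing; all the work goes into the block analysis I)--VIII) proving that $H=\langle\alpha,\beta\rangle$ is primitive, after which $\langle\lambda,\beta\rangle\supseteq H$ and $\langle\omega,\theta\rangle\supseteq H$ are primitive as well. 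Your scheme buys primitivity cheaply (via Example \ref{e1}) at the price of a hard, unproven realization step; the paper buys realization cheaply at the price of a hard primitivity proof, which it actually carries out. As written, your proposal is a plausible program rather than a proof.
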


\begin{thm}\label{nt3}  Every non-trivial admissible data   are  realizable  by an indecomposable
 $($and, hence, primitive$)$ branched  covering  on any $N$  with $\chi(N)\leq 0$. 
\end{thm}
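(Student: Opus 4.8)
The plan is to reduce the general case to the already-established Theorem~\ref{nt2}, which handles a single non-trivial partition of a non-prime degree on the torus $T_1$ and on the Klein bottle $P_2$. Given non-trivial admissible data $\mathscr{D}=\{D_1,\dots,D_t\}$ of degree $d$ on an arbitrary $N$ with $\chi(N)\leq 0$, I would first treat the base-surface complexity and the multiplicity of branch points separately, since these are the two directions in which the general $N$ differs from the two model surfaces. The key construction is to produce a single representation $\rho:\pi_1(N-B_\phi,z)\lra \Sigma_d$ whose image $G$ is a transitive \emph{primitive} permutation group and whose peripheral elements $\rho(\mathbf{u}_{x_i})$ realize the partitions $D_i$; by Proposition~\ref{yo} primitivity of $G$ is exactly indecomposability of the resulting branched covering, and by Hurwitz's Theorem (\cite{Hurwitz}, quoted above) transitivity guarantees realizability.

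First I would dispose of the degenerate case $d$ prime: then every non-trivial admissible datum is automatically realized by a \emph{primitive} covering by Theorem~\ref{D}, and a covering of prime degree is trivially indecomposable because its degree admits no non-trivial factorization (recall the degree of a decomposition is the product of the degrees of the factors, \cite{BBZ}). So assume $d$ is non-prime. Next, I would reduce the number of branch points to one: choose one of the partitions, say $D_1$, and observe that by assigning the generators $a_j,b_j$ (or $a_j$ in the $P_g$ case) together with the peripheral generators $\mathbf{u}_{x_2},\dots,\mathbf{u}_{x_t}$ suitable permutations, I can arrange that $G$ contains a long cycle — a $(d-1)$-cycle or a $d$-cycle, or an $\ell$-cycle with $\ell$ as in Example~\ref{REF} — which by Examples~\ref{e1} and~\ref{REF} forces $G$ primitive regardless of the other branch data. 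The surplus generators coming from the genus of $N$ (and from the extra branch points) give me more than enough freedom to insert such a cycle while still making each $\rho(\mathbf{u}_{x_i})$ lie in the prescribed cyclic structure $D_i$ and while keeping $G$ transitive.

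The mechanism that lets me transport the genus-one model of Theorem~\ref{nt2} to arbitrary $N$ is the following: the relation in $\pi_1(N-B_\phi,z)$ reads $\mathbf{u}_{x_1}\cdots\mathbf{u}_{x_t}[a_1,b_1]\cdots[a_g,b_g]=1$ (resp.\ $\mathbf{u}_{x_1}\cdots\mathbf{u}_{x_t}a_1^2\cdots a_g^2=1$), and I am free to send the "extra" handle generators $a_2,b_2,\dots,a_g,b_g$ (resp.\ $a_2,\dots,a_g$) to the identity $1_d$, so that they contribute trivially to the product and the relation collapses to the genus-one (resp.\ $P_2$) relation among the remaining symbols. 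Thus any assignment realizing $\mathscr{D}$ with a primitive image on the model surface $T_1$ or $P_2$ lifts verbatim to $N=T_g$ or $N=P_g$; I only need Theorem~\ref{nt2}'s single-partition output to seed a primitive cycle, then fill in the remaining $\mathbf{u}_{x_i}$ with arbitrary permutations of cyclic structure $D_i$ and check that the product relation can still be satisfied. The parity constraint $\nu(\mathscr{D})\equiv 0\pmod 2$ (Hurwitz's condition~\eqref{hc2}) is precisely what makes $\rho(\prod_i \mathbf{u}_{x_i})$ an even permutation, hence expressible via the commutator/square part, so the relation is solvable.

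The main obstacle I anticipate is the simultaneous bookkeeping: I must keep $G$ transitive \emph{and} primitive \emph{and} realize all the prescribed partitions \emph{and} satisfy the surface relation, all at once, rather than one constraint at a time. Primitivity is the delicate one, because adding the other peripheral permutations $\rho(\mathbf{u}_{x_i})$ could in principle create a non-trivial block system that the seeding long cycle alone does not destroy; the safeguard is Example~\ref{e1}, which shows that once $G$ contains a $(d-1)$-cycle fixing a point, no non-trivial block can survive, so \emph{any} additional generators leave $G$ primitive. Concretely I would therefore engineer the representation so that a single generator maps to such a near-full cycle, treating the images of the $\mathbf{u}_{x_i}$ as the data I am forced to realize and the handle generators as the adjustable slack absorbing the relation. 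Once these pieces fit together, transitivity follows from primitivity (a primitive group is transitive by definition), indecomposability follows from Proposition~\ref{yo}, and realizability follows from Hurwitz's Theorem, completing the argument.
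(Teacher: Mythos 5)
Your proposal has a genuine gap, and it is concentrated exactly where the theorem is hardest: on the minimal surfaces $N=T_{1}$ and $N=P_{2}$ with data $\mathscr{D}=\{D_{1},\dots,D_{r}\}$ containing more than one partition. Your primitivity mechanism is to park a long cycle on ``surplus'' generators, but on $T_{1}$ the only non-peripheral generators are $a_{1},b_{1}$, and they are fully constrained by the relation $[\rho(a_{1}),\rho(b_{1})]=(\gamma_{1}\cdots\gamma_{r})^{-1}$; on $P_{2}$ likewise $\rho(a_{1})^{2}\rho(a_{2})^{2}$ is forced. There is no slack at all, and Theorem~\ref{nt2} as stated only produces a primitive pair whose commutator (resp.\ $\omega^{2}\theta^{2}$) realizes a \emph{single} partition, so it cannot be combined ``verbatim'' with additional peripheral permutations $\gamma_{2},\dots,\gamma_{r}$ without destroying the surface relation. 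The idea you are missing is the paper's reduction: choose $\gamma_{i}\in D_{i}$, form the product $\gamma_{1}\cdots\gamma_{r}$ (perturbing the $\gamma_{i}$ --- replace one by its inverse, or swap two symbols --- in the degenerate case where the product is $1_{d}$), and feed the \emph{cyclic structure $D$ of this product} to Theorem~\ref{nt2}. The pair $(\lambda,\beta)$ so obtained does double duty: after conjugation its commutator equals $(\gamma_{1}\cdots\gamma_{r})^{-1}$, solving the relation with $\mathbf{u}_{x_i}\mapsto\gamma_i$ and the extra handle generators sent to $1_{d}$, and it generates a primitive group, so the whole monodromy group is primitive. Primitivity thus comes from $a_{1},b_{1}$ themselves, which is why the argument survives on $T_{1}$ and $P_{2}$.

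Three further points would also need repair even on higher-genus surfaces. First, a $d$-cycle does \emph{not} force primitivity (it generates a regular cyclic group, imprimitive for composite $d$); only the $(d-1)$-cycle of Example~\ref{e1} or the $\ell$-cycle of Example~\ref{REF} does. Second, on $P_{g}$ a ``spare'' generator $a_{j}$ enters the relation as $\rho(a_{j})^{2}$, so inserting a long cycle there is not free: its square must be compensated, and your appeal to parity fails here because an even permutation need not be a square (e.g.\ a permutation with one $4$-cycle and one $2$-cycle), so ``expressible via the commutator/square part'' is unjustified in the nonorientable case. Third, even in the orientable case your solvability of $[\rho(a_{1}),\rho(b_{1})]=(\prod_{i}\gamma_{i})^{-1}$ rests on the classical fact that every even permutation is a commutator, which you neither prove nor cite; the paper never needs it, precisely because Theorem~\ref{nt2} hands it the required expression explicitly. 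Your treatment of prime $d$ and your observation that extra handle generators of $T_{g}$ can be sent to $1_{d}$ (so that genus is harmless) are both correct and agree with the paper, but the core of the proof --- making one construction satisfy the relation and force primitivity simultaneously on the minimal surfaces --- is absent.
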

\begin{proof}
Let $\mathscr{D}=\{D_{1},\dots,D_{r}\}$ be admissible. For 
 $i=1,\dots,r$, choose $\gamma_{i} \in \Sigma_{d}$ such that its cyclic
 structure is  $D_{i}$.

 If   $\prod_{i=1}^{t}\gamma_{i} \neq 1_{d}$, the cyclic structure of this product
 is a  non-trivial partition  $D=[d_{1},\dots,d_{t}]$ of $d$ with
$t<d$ and $\nu(D) \equiv \nu(\mathscr{D})\equiv 0 \pmod{2}$. Thus, by Theorem
\ref{nt2} it is realizable by an indecomposable (and, hence, primitive)  branched 
covering on $T_{1}$ (on $P_{2}$, respectively). By Proposition \ref{yo}, there exist permutations $\lambda, \beta \in \Sigma_{d}$
($\omega, \theta$, respectively) such that the cyclic structure of 
$[\lambda,\beta]$ ($\omega
\theta \omega \theta^{-1}$ or $\omega^{2}\theta^{2}$, respectively) is   $D$ and the permutation group
 $G_{1}:=\langle
\lambda, \beta \rangle$ ($G_{2}:=\langle\omega,\theta \rangle$, respectively)   is transitive and  primitive.
 Thus we define the representation:
 $\rho_{1}: \langle
\{ a_{i},b_{i}\}_{i=1}^{g}\cup\{ \mathbf{u_{j}}\}_{j=1}^{t}| \Pi_{j=1}^{t} \mathbf{u_{j}}=\Pi_{j=0}^{g-1}[b_{g-j},a_{g-j}] \rangle  \lra  \Sigma_{d}$ sending
 $a_{1}  \longmapsto  \beta$,
 $b_{1} \longmapsto \lambda$,
$\mathbf{u_{i}} \longmapsto  \gamma_{i}$ and
$\{a_{j},b_{j}\}_{j=2}^{g} \longmapsto 1_{d}$,
for $N=S_{g}$ (respectively for $N=P_{g}, g>1$,  $\rho_{2}: \langle \{ a_{i}\}_{i=1}^{g}\cup\{ \mathbf{u_{j}}\}_{j=1}^{t}| \Pi_{j=1}^{t} \mathbf{u_{j}}=\Pi_{j=0}^{g-1}a_{g-j}^{2} \rangle  \lra  \Sigma_{d}$, sending  $a_{1} \longmapsto \theta$, $a_{2} \longmapsto \omega$, 
$\mathbf{u_{i}} \longmapsto \gamma_{i}$ and $\{a_{j}\}_{j=3}^{g}\longmapsto 1_{d}$).
Since $G_{1}<\Im(\rho_{1})$, 
$\Im{\rho_{1}}$ is transitive and primitive. Thus, by
 Proposition \ref{yo}, the primitive branched covering that it determines
 is indecomposable (analogously for $G_{2}$).

 If $\prod_{i=1}^{t}\gamma_{i}=1_{d}$ and there is some 
 $\gamma_{i}$ with a cycle of length $\geq 3$, we change  $\gamma_{i}$ 
by $\gamma_{i}^{-1}$. If $d>2$ and each $\gamma_{i}$ is a product of cycles with length
 $\leq 2$, we change a symbol in a cycle of length 2 by a symbol in another
 cycle. Thus, we do not change the cyclic structure of the $\gamma_{i}$'s, 
 the  new product $\prod_{i=1}^{t}\gamma_{i}$ is different from $1_{d}$ and we are
 in the case before. If $d=2$ then $\mathscr{D}$ is obviously realizable on any $N$; the corresponding branched 
covering is indecomposable,  since $2$ is a prime.
\end{proof}
\section{Proof of Theorem \ref{nt2}}
Let $d \in \mathbb{N}$ be not a prime and $D=[d_{1},\dots,d_{t}]$ a non-trivial partition
of $d$ such that $\nu(D)=d-t \equiv 0 \pmod{2}$. 
By  Proposition \ref{yo}, it is enough to prove the existence
 of permutations
 $\lambda, \beta,\omega, \theta \in \Sigma_{d}$ such that $[\lambda, \beta]$ for $T_{1}$ and, either  $\omega \theta 
 \omega \theta^{-1}$ or $\omega^{2}\theta^{2}$ for $P_{2}$, have cyclic structure 
 $D$ and the
 permutation groups 
  $G_{1}:=\langle \lambda, \beta \rangle$  and $G_{2}:=\langle \omega, \theta \rangle$ are primitive, respectively. We divide the proof 
in according to the following three cases:
\begin{itemize}
\item[(1)] $t=1$,
\item[(2)] $D=[2, \dots,2]$,
\item[(3)] $t>1$ and $d_{i} \neq 2$ for some $i \in \{1,\dots,t\}$.
\end{itemize}
 For (1), the permutations defined in \cite{BGKZ2} (proof of
 theorem  2.2 and theorem 2.3, case $r=1$) work (see below). For 
(2) and (3), we use the following idea: given $\alpha \in D$,  we define a permutation
$\beta \in \Sigma_{d}$ such that $\beta$ and $\alpha \beta$ have the same
cyclic  structure, moreover the permutation group $H:=\langle \alpha, \beta\rangle<\Sigma_d$ is transitive and primitive. Then there exists $\lambda \in \Sigma_{d}$ such that
$\alpha=[\lambda, \beta] $ and $\langle
\lambda, \beta \rangle$ is also transitive and  primitive (since it conains $H$). Analogously, 
since $\alpha \beta$ and
$\beta^{-1}$ are conjugate, there exists $\omega \in \Sigma_{d}$ such that
$\alpha \beta = \omega \beta^{-1} \omega^{-1}$. We define
$\theta=\omega^{-1}\beta^{-1}$, then $\alpha=\omega^{2}\theta^{2}$ and 
 $\langle \omega, \theta \rangle$ is transitive and  primitive.
\subsection*{Case (1)}
 If $t=1$ then $D=[d]$ and $d=2k+1$ with $k>0$. For $T_{1}$ we define
    permutations 
    $\lambda=(k+1\quad k+2\;\dots\;2k \quad 2k+1)$  and
\begin{displaymath}
\mathbf{\beta=} 
\left( \begin{array}{ccccccc} 
1 &\dots & k & k+1& k+2& \dots  & 2k+1\\
2k+1 & \dots &  k+2 & k+1 & 1 & \dots & k
\end{array} \right),
\end{displaymath}
then $[\lambda,\beta]=(1 \; 2 \dots k \; k+1\;k+2 \dots 2k+1)$ has cyclic
 structure  $D$ and $G_{1}=\langle 
 \lambda, \beta \rangle$ is transitive. 
Due to Example \ref{REF} with $d=2k+1$ and $\ell=k+1$, $G_1$ is a primitive 
permutation group. For
 $P_{2}$, we
 define  $\omega=\lambda$,
\begin{displaymath}
 \mathbf{\theta=} 
\left( \begin{array}{ccccccc}
1 & 2&\dots  & k+1& k+2& \dots & 2k+1\\ 
2k+1 &k+1& \dots & 2k & 1 & \dots & k
\end{array} \right),
\end{displaymath}   
note that $\omega \theta \omega \theta^{-1}=(1\;2\dots2k+1)$ has cyclic
structure $D$ and, analogously to the orientable case, we conclude
that 
$G_{2}=\langle 
\omega, \theta \rangle$ is a primitive permutation group.
\subsection*{Case (2)}
If $D=[2,\dots,2]$ then $d=2t$ with $t$ even. 
 Let  $\alpha=(1\;2)(3\;4)\dots(t-1\;t) \dots (2t-1\;2t) \in D$
 and 
$\beta=(1\;3\dots t-1 \dots 2t-1 \; 
4)(2 \; 6 \dots t \dots 2t)$, i.e. the first cycle is defined by the increasing  sequence of odd numbers, from $1$ to $2t-1$, followed by the even number 4, and the second cycle is defined by the increasing sequence of even numbers from $2$ to $2t$ without $4$. By induction on  $t$ we can see that  $\beta$
and $\alpha \beta$ are conjugate, then there exists $\lambda \in \Sigma_{d}$
 such that
$\alpha 
\beta=\lambda \beta \lambda^{-1}$. Thus $[\lambda,\beta]=\alpha$ has cyclic
 structure $D$. Consider the transitive group 
 $G_{1}:=\langle \lambda, 
\beta\rangle$. 
Let $\| 1 \| $ denote a non-trivial block of $G_1$ containing the element $1$. If   $\| 1 \| \subset 1^{\langle \beta \rangle}$ is 
contained in the orbit of $1$ by $\langle \beta \rangle$, its cardinality  $\#\| 1 \|$ is a common factor of
$t+1$ and
$2t$, thus $\#\| 1 \|=1$. On the other hand, if $\| 1 \|$
 contains elements of both cycles of  $\beta$ then $\gcd( t+1,t-1)\neq 1$, which is impossible. Since the blocks are trivial, $G_{1}$ is primitive and we resolved for
 $T_{1}$. For $P_{2}$, we use the idea of the sketch of the proof in the beginning of this section to define  $\omega,\; \theta \in
 \Sigma_{d}$ such that
$\alpha=\omega^{2}\theta^{2}$. Since 
 $G_{2}:=\langle \omega, \theta \rangle= \langle
 \omega,\beta 
\rangle$ is transitive and $\beta$ determines the primitivity of  $G_{1}$, then $G_{2}$ is also
 primitive.  
 \subsection*{Case (3)}
 Finally suppose $t>1$ and $d_{i} \neq 2$ for some  $i
\in \{1,\dots,t\}$. We define $\delta_{0}:=0$,  
$\delta_{i}:=\sum_{j=1}^{i}d_{j}$, 
 $C_{i}:=(\delta_{i-1}+1 \dots \delta_{i}) \in \Sigma_{d}$ and  the
sequence 
$\Delta_{i}:=\{\delta_{i-1}+k\}_{k=2}^{d_{i}}$, for $i=1,\dots,t$. Since $t<d$, we impose
$d_{1}>1$ thus $\Delta_{1}\neq \emptyset$. Without loss of generality
let
\begin{eqnarray*} 
\alpha:=\prod_{i=1}^{t}C_{i}=(1\;\underbrace{2\dots \delta_{1}}_{\Delta_{1}})(\delta_{1}+1\;\underbrace{\delta_{1}+2\dots \delta_{2}}_{\Delta_{2}})\dots(\delta_{t-1}+1         
\; \underbrace{\delta_{t-1}+2\dots\delta_{t}}_{ \Delta_{t}}) \in D. 
\end{eqnarray*}
 Since there is $d_{i}\neq 2$ then either $Fix(\alpha) 
\neq \emptyset$ or $\alpha^{2}\neq 1_{d}$. Define the $d$-cycle
\begin{eqnarray*}
 \beta:=(1\quad \delta_{1}+1\quad \delta_{2}+1 \dots
\delta_{t-1}+1\quad \Delta_{1}\quad \Delta_{2} \dots \Delta_{t}).
\end{eqnarray*}
Denote by
$E_{i},O_{i}$ the increasing sequences of even and odd elements 
in $\overline{Supp(C_{i})}:=\{\delta_{i-1}+1\}\cup\Delta_i$, 
respectively. Note that $\alpha \beta$ is a $d$-cycle obtained by  concatenating  these sequences, thus:
\begin{displaymath} \label{gf1}
\begin{small}
\alpha \beta= \left\{ \begin{array}{ll}
(O_{1}E_{2}O_{3} \dots 
E_{t}E_{1}O_{2}E_{3}\dots O_{t}) & \textrm{if $t$ is even,}\\
(O_{1}E_{2}O_{3} \dots 
O_{t}E_{1}O_{2}E_{3}\dots E_{t})& \textrm{if $t$ is odd}. \\
\end{array} \right.
\end{small}
\end{displaymath}
Here we use that  the $t$-th term is either $E_t \ni d$ if $t$ is even or $O_t\ni d$
if $t$ is odd, hence it is non-empty and terminates by $d$, hence  the next term is $E_1$.
Hence $\alpha \beta$ and 
$\beta$ are conjugate and there is
$\lambda \in \Sigma_{d}$ such that $\alpha \beta=\lambda
\beta \lambda^{-1}$ and $\alpha=[\lambda,\beta]$.

Let  $H:=\langle \alpha,\beta \rangle$, it is obviously transitive. We assert that $H$ is a primitive
permutation group (at  least after a suitable permutation of $d_1,\dots , d_t$, see below). By contradiction, 
suppose the existence of a non-trivial
divisor   $n$ of  $d$ such that $H$ determines $n$ blocks of cardinality 
 $d/n$. We have the following consequences:
 \begin{enumerate}
 \item[I)] 
$\| 1 \|= \| 1^{\beta^{n}}\|$  and 
$\| i \|= \{i,i^{\beta^n},i^{\beta^{2n}},\dots,i^{\beta^{(d/n-1)n}}\}=\{i^{\beta^{kn}} | k\in \mathbb{Z}\}$
for any $i\in \{1, \dots,d\}$. Hence 
$R:=\{1,1^{\beta},1^{\beta^{2}},\dots,
1^{\beta^{n-1}}\}$  is a set of representatives of all blocks, and consecutive elements in $\beta$ are in different blocks.
\begin{proof}
 Since $\beta$ is a $d$-cycle, the blocks are completely determined by
 the cycles of  $\beta^{n}$. 
  \end{proof} 
  \item[II)]
  $n<t$ (at least if $d_1=max\{d_1,\dots,d_t\}$ and $d_2=min\{d_1,\dots,d_t\})$.  Hence $R=\{1,\delta_{1}+1,\delta_{2}+1,\dots,\delta_{n-1}+1\}$, $1^{\beta^{n}}=\delta_{n}+1$ and 
$\Gamma:=\{\| 1 \|,\| \delta_{1}+1 \|,\dots,\|
\delta_{n-1}+1\| \}$ is the system of all blocks.
\begin{proof}
 If $n \geq t$, 
there is $i \in 
\{1,\dots,t\}$ such that $1^{\beta^{n}} \in \Delta_{i}$,
\begin{eqnarray*}
 \beta=(1\;\underbrace{\delta_{1}+1\dots \delta_{i}+1 \dots
\delta_{t-1}+1\quad \Delta_{1}\quad \Delta_{2} \dots\hspace*{0.6em}}_{n} \hspace*{-0.6em}\Delta_{i}  \dots \Delta_{t}).
\end{eqnarray*}
If $i>1$   then 
$1^{\alpha^{-1}}$ and $1^{\beta^{n}\alpha^{-1}}$, both in $R$,
are  in the same block by the first assertion in I), a contradiction with the second assertion in I).  If
$i=1$ (and hence $1^{\beta^n}=n-t+2$) and there is $d_j>2$ then necessarily $d_j\leq n-t+2=1^{\beta^n}\in \Delta_1=\{2,\dots,d_1\}$,
provided that $d_1=max\{d_1,\dots,d_t\}$. In fact, otherwise  we put $j=1$ and by applying $\alpha$ to $1$  and  
$1^{\beta^{n}}$  we obtain the
elements $1^{\alpha} =2=1^{\beta^{t}}$ and 
$1^{\beta^n\alpha}=n-t+3=1^ {\beta^{n+1}}$ in
 $\Delta_{1}$, 
in the same block, and by I) this implies $n|(t-1)$, hence $n<t$ or $t=1$, a contradiction.
 Then $1^{\beta^{n}}=n-t+2=d_1$, hence $1^{\alpha}=2<d_1=1^{\beta^n}$ and $1^{\beta^n\alpha}=d_1^{\alpha}=1$, both in $R$, are different elements 
in the same block,
 \begin{eqnarray*}
 \beta=(1\;\delta_{1}+1\dots 
\delta_{t-1}+1\quad \overbrace{\underbrace{2\dots n-t+2}_{n-t+1}\dots d_{1}}^{\Delta_{1}}\quad \Delta_{2} \dots\Delta_{i} \dots \Delta_{t}),
\end{eqnarray*}
a contradiction with I). If $i=1$ and $d_{j}\leq 2$ for $j=1,\dots,t$, then $\Delta_{j}$
has at most one element and
$\alpha^{2}=1$. Then  Fix$(\alpha)\neq
\emptyset$ and $\alpha$ is the product of an even number of transpositions, because $\nu(D) \equiv 0 \pmod{2}$. We put  $C_{1}=(1\;2)$, $C_{2}=(3)$ and
$C_{3}=(4\; 5)$, then  $\beta=(1\;3\;4\dots\delta_{t-1}+1 \;\; 2\; 5\dots)$. Now $i=1$ implies $n-t+1\leq|\Delta_1|=1$, hence
 $n=t$,  then   
$\|1\|= \|2 \|$. Hence  $\| 1^{\beta \alpha} \| = \|2^{\beta \alpha}\|$, i.e. we obtain 
$3$ and $4$, consecutive in $\beta$, in the same block,  
 a contradiction with I).
\end{proof}
\item[III)]
 $n \nmid t$ (at least if $d_1=max\{d_1,\dots,d_t\}$ and $d_2=min\{d_1,\dots,d_t\})$. Hence $n\neq 2$ and $1^{\alpha} \notin \| 1 \|$.
 \begin{proof}
  If $n \mid t$ then I) and   definition of  $\beta$ imply  $1^{\alpha} \in
 \| 1 \|$ (since $1^{\alpha}=2=1^{\beta^t}$if $d_1\ne 1)$,  then: if there is some $d_{i}>2$, we put $i=1$ and 
by  applying $\alpha$ to $1$ and $1^{\alpha}=2$,  
we obtain the consecutive elements 2 and 3 in $\Delta_1$ in the same block, a contradiction with I).
If $n\mid t$ and $d_i\leq 2$ for $i=1,\dots,t$,  then from $\| 1 \|=\| 2 \|$ we obtain a contradiction as in the last part of II).
This proves that $n\nmid t$. If $n=2$  then $n\mid d$ and $n\nmid t $ contradict to  Hurwitz's condition $d\equiv t \ (mod \ 2)$. Now 
$1^{\alpha}=2=1^{\beta^t} \notin \{1^{\beta^{kn}} |k\in \mathbb{Z}\}=\| 1 \|$,   since $n\nmid t$.
\end{proof}
\item[IV)] $\alpha=(1\dots\delta_{1})(\delta_{1}+1 \dots \delta_{2})\dots (\delta_{t-1}+1\dots \delta_{t})$ and $\beta=(1\;\delta_{1}+1\dots\delta_{t-1}+1\;\Delta_{1}\dots\Delta_{t})$ induce permutations  $\bar {\alpha}, 
\bar {\beta}$ of $\Gamma=\{\| 1 \|,\| \delta_{1}+1 \|,\dots,\|
\delta_{n-1}+1\| \}$. 
 By definition of $\beta$ we have 
\begin{eqnarray*}
\bar {\beta}=(\| 1 \|\;\| \delta_{1}+1 \|\;\|
\delta_{2}+1 \| \dots
\| \delta_{n-1}+1 \|).
\end{eqnarray*}
In order to determine  $\bar {\alpha}$, it is enough to know in which blocks are the elements of $\overline {Supp(C_{i})}$, for $i=1,\dots,t$. For $C_{1}$ note that:
\item[V)]
Every element of $\| 1 \|$ is in a cycle of
 $\alpha$ with length bigger than 2 (provided that the same assumptions as in  II) and III) hold). In
particular $d_{1}>2$.
\begin{proof}
 If $x \in
\| 1 \| \cap Fix(\alpha)$ then $1^{\alpha}\in \| 1 \|$, a contradiction with III). If $x \in \| 1\|$ is in a transposition of $\alpha$ then each
element of $\| 1 \|$ is also in a transposition (see I)) and $d_{i}\leq 2$ for all
 $i$ (otherwise we put $d_{1}>2$, then $1\in \| 1\|$ is in a $d_1$-cycle which is not a transposition).  
 Put
 $d_{1}=2$ and $d_{2}=1$. By I) and II),
   $\| 1^{\alpha} \| = \|(\delta_{n}+1)^{\alpha}\|$, but this is impossible because for all $i$, $\Delta_{i}$ has at most one element, in 
particular $\Delta_{1}=\{1^{\alpha}\}$, $\Delta_{2}=\emptyset$ and $\Delta_{n+1}=\{(\delta_n+1)^{\alpha}\}$ (respectively $\Delta_{n+1}=\emptyset$),  then 
the power of
 $\beta$ that takes $1^{\alpha}$ to $(\delta_{n}+1)^{\alpha}$ is
 smaller than $n$ (respectively  equals  $n-t$, which is not divisible by $n$ in according to III)) and  they are represented by different elements in $R$ (see I)),  a contradiction.
\end{proof}
\item[VI)]  If $\| \delta_{j-1}+1 \| \cap Supp(C_{1}) \neq \emptyset$ then $d_{j}>2$, $2 \leq j\leq t$.
\begin{proof} Suppose that  $x \in Supp(C_{1}) \cap \| \delta_{j-1}+1 \|$ and
 $d_{j}\leq 2$. Since $d_{1}>2$ by V), by applying successively 
$\alpha^2$ to  $x$ (respectively to $\delta_{j-1}+1$),
we obtain elements in $\Delta_{1}$ (respectively  $\delta_{j-1}+1$ itself) in 
the same block such that the power of $\beta$ that takes one to the other is $\leq 2$, a contradiction with III) and I). 
\end{proof}
\item[VII)] 
$d_{1}<n$ and $\| 1\|\cap \Delta_1=\emptyset$. Hence 
different elements in $Supp(C_{1})$ determine different blocks and $(\| 1 \|\; \| 2\| \dots \| d_{1}\|)$ is
a $d_1$-cycle
 of $\bar{\alpha}$. 
\begin{proof}
If $d_{1} > n$ (or at least $\| 1\|\cap \Delta_1 \ne \emptyset$), then $n|d_1$ and $\delta _{t-1}+1\in \| 1\|$, thus the  first 
element of $\Delta_{1}$ in $\| 1\|$ is $n+1$, 
since otherwise, by applying $\alpha$ to this element and to  $1$, we obtain in  $\Delta_{1}$ elements in the
same block that determine different elements of $\Gamma$.  If $d_1=n$ then  similar arguments show that $\delta_{t-1}+1\in \| 1\|$.
Then

\begin{eqnarray*}
\beta=(1\; \delta_{1}+1\dots \delta_{n-1}+1\dots \delta_{t-1}+1\;\overbrace{2 \dots n \dots d_{1}}^{\Delta_{1}}\; \overbrace{\delta_{1}+2 \dots \delta_{2}}^{\Delta_{2}} \dots \Delta_{t})
\end{eqnarray*}
 and, by I) and definition of $\bar{\beta}$, necessarily $\delta_{t-1}+1
\in \|1 
\|$, $2 \in \| \delta_{1}+1
\|$ (then $d_{2}>2$ and $\Delta_{2}\neq \emptyset$, by VI)), $n \in \| \delta_{n-1}+1 \|$ and  $\bar{\alpha}=\bar{\beta}=(\| 1 \|\;\| \delta_{1}+1 \|\;\|
\delta_{2}+1 \| \dots
\| \delta_{n-1}+1 \|)$. Moreover $d_{1}\in \| \delta_{n-1}+1 \|$ 
and $\delta_{1}+2 \in
 \| 1 \|$ (since $d_2>1$), but
$(\delta_{1}+1)^{\alpha}=\delta_{1}+2$ then $\| \delta_{1}+1
\| ^{\bar{\alpha}}=\| \delta_{1}+2 \|=\| 1 \|$ and $n=2$, a contradiction with III).
\end{proof}
\item[VIII)] The cycle in VII) can be represented as $(\| 1 \| \;\| \delta_{1}+1 \| \; \| \delta_{2}+1\|
  \dots 
  \| \delta_{d_{1}-1}+1 \| )$ which implies $d_{1}=2$, a contradiction with V).
  \begin{proof}
 Let  $t=nq+r$ with $q, r \in \mathbb{Z}^{+}$, $r<n$ (then $q\geq 1$ by II) and $r\geq 1$ by III)). Definition of  $\beta$
with VII)  imply $n>r+d_{1}-2$, 
\begin{small}
\begin{eqnarray*}
 \beta=(1\quad \delta_{1}+1 \dots \delta_{n}+1 \dots \delta_{nq}+1\underbrace{ \overbrace{ \dots
\delta_{t-1}+1\; \underbrace{2\dots d_{1}}_{\Delta_{1}}}^{r+d_{1}-2}\;\underbrace{\delta_{1}+2 \dots \delta_{2}}_{\Delta_{2}} \dots \hspace*{0.6em}}_{n}\hspace*{-0.6em}\Delta_{i} \dots \Delta_{t})
\end{eqnarray*}
\end{small}

\noindent  and   $(\delta_{nq}+1)^{\beta^{n}}\in \|1\|$ by I),   with either 
  $(\delta_{nq}+1)^{\beta^{n}}=1 $ (if $d=nq+n,$ i.e. $t>d-n$) or 
$(\delta_{nq}+1)^{\beta^{n}} \in \Delta_{i} $ for some $i$ in
  $\{2,\dots,t\}$ (if $d>nq+n,$ i.e. $t\leq d-n$). Suppose that   $d>nq+n,$ thus $(\delta_{nq}+1)^{\beta^{n}} \in \Delta_{i}.$    By V), 
$\Delta_{i}$ has more than 1 element and, since
  $1^{\alpha^{-1}}$ and $(\delta_{nq}+1)^{\beta^{n}\alpha^{-1}}$
 are in the same block, necessarily
 $(\delta_{nq}+1)^{\beta^{n}}=\delta_{i-1}+2$ 
  (the first element of $\Delta_{i}$). Then $1^{\alpha \beta^{-1}}$ and
  $(\delta_{nq}+1)^{\beta^{n}\alpha \beta^{-1}}$ are in $\| 1
  \|^{\bar{\alpha} \bar{\beta}^{-1}}$, but $(\delta_{nq}+1)^{\beta^{n}\alpha \beta^{-1}}=\delta_{i-1}+2$. 
Hence  $\| 1 \|^{\bar {\alpha}\bar {\beta}^{-1}}=\| 1 \|$, $2 \in
  \| \delta_{1}+1 \|$
  and, by definition of $\alpha$ and  $\beta$, 
  $(\| 1 \| \;\| \delta_{1}+1 \| \; \| \delta_{2}+1\|
  \dots 
  \| \delta_{d_{1}-1}+1 \| )$ is a 
  cycle of $\bar {\alpha}$.
     Then $2 \in \| \delta_{1}+1 \|$ (then
  $d_{2}>2$ by VI)), $d_{1} \in \| \delta_{d_{1}-1}+1 \|$ and $\delta_{1}+2 \in \| \delta_{d_{1}}+1 \|$. But
  $\delta_{1}+2=(\delta_{1}+1)^{\alpha}$, then 
$\| \delta_{2}+1  \|=\| \delta_{1}+1  \|^{\bar{\alpha}}=\| \delta_{d_{1}}+1\|$, hence $n | (d_1-2)$ by I), hence $d_1=2$ by VII). 
 Suppose that $d=nq+n$, thus $t>d-n$. Since $\delta_{nq}+1\in \| 1\|$, it follows from V) that  $\Delta _{nq+1}\ne \emptyset$ and $(\delta_{nq}+1)^{\alpha}\in \Delta _{nq+1}$, 
thus $1^{\alpha}$ and $(\delta_{nq}+1)^{\alpha}$ are different  elements in $\Delta_1$  
and $\Delta_{nq+1}$,  hence the power  of $\beta$ that takes one to the other is  smaller than $d-t<n$. It follows from I) that these 
two elements are in different blocks, a contradiction. 
  \end{proof}
  \end{enumerate}
 Then  any block of
   $H$ is trivial and $H$ is primitive. Finally, since  $H<G_{1}=
  \langle \lambda, 
  \beta \rangle$ then $G_{1}$ is also primitive and we resolve for $T_{1}$.
 For $P_{2}$ we use the idea of the sketch of the proof at the beginning  to define  $\omega$,
 $\theta$ and  $G_{2}=\langle \omega, \theta \rangle$. We conclude that $G_{2}$ is
  primitive because it contains $H$. $\qed$

\vspace{1cm}
Departamento de Matem\'atica DM-UFSCar

Universidade Federal  de S\~ao Carlos

Rod. Washington Luis, Km. 235. C.P 676 - 13565-905 S\~ao Carlos, SP - Brasil

nbedoya@dm.ufscar.br

and

Departamento de Matem\'atica

Instituto de Matem\'atica e Estat\'istica

Universidade de S\~ao Paulo

Rua do Mat\~ao 1010, CEP 05508-090, S\~ao Paulo, SP, Brasil

dlgoncal@ime.usp.br.

\end{document}